\documentclass[11pt, a4paper, twoside]{article}
\usepackage[a4paper, margin=3.75cm]{geometry}
\usepackage{amsmath}
\usepackage{amssymb}
\usepackage{amsthm}
\usepackage{mathrsfs}
\usepackage{graphicx}

\usepackage{titlesec}
\titleformat{\section}{\normalfont\Large\bfseries}{\thesection.}{0.5em}{}
\titleformat{\subsection}[runin]{\normalfont\normalsize\bfseries}{\thesubsection.}{0.5em}{}
\titleformat{\subsubsection}[runin]{\normalfont\normalsize\bfseries}{\thesubsubsection.}{0.5em}{}

\usepackage{fancyhdr}
\usepackage{hyperref}
\hypersetup{
  unicode=true,
  pdfborder={0 0 0},
  pdffitwindow=false,
  pdfstartview={FitH},
  pdfpagelayout={OneColumn},
  pdftitle={On Convergence of Rational Hermite-Padé Approximants},
  pdfauthor={Nikolay R. Ikonomov, Sergey P. Suetin},
  pdfsubject={arXiv},
  pdfkeywords={analytic extension, Padé approximants, Hermite-Padé polynomials},
  allcolors=blue,
  colorlinks=true,
  breaklinks=true
}

\def\({\left(}
\def\){\right)}
\def\[{\left[}
\def\]{\right]}
\def\<{\left\langle}
\def\>{\right\rangle}

\let\leq\leqslant
\let\geq\geqslant

\let\myh\widehat
\let\myt\widetilde
\let\myo\overline
\let\eps\varepsilon

\def\NN{\mathbb N}
\def\RR{\mathbb R}
\def\CC{\mathbb C}
\def\PP{\mathbb P}
\def\HH{\mathscr H}

\def\supp{\operatorname{supp}}
\def\mcap{\operatorname{cap}}
\def\mdeg{\operatorname{deg}}
\def\const{\operatorname{const}}
\def\meas{\operatorname{meas}}
\def\equ{\operatorname{equ}}
\def\GRS{\operatorname{GRS}}

\theoremstyle{plain}
\newtheorem{theorem}{Theorem}
\newtheorem{lemma}{Lemma}
\newtheorem{corollary}{Corollary}

\theoremstyle{definition}
\newtheorem{remark}{Remark}

\begin{document}

\title{\Large\bfseries On Convergence of Rational Hermite--Padé Approximants}

\author{Nikolay R. Ikonomov${}^1$, Sergey P. Suetin${}^2$\\[2mm]
\normalsize ${}^1$Institute of Mathematics and Informatics,\\
\normalsize Bulgarian Academy of Sciences, Bulgaria\\
\normalsize \href{mailto:nikonomov@math.bas.bg}{nikonomov@math.bas.bg}\\[2mm]
\normalsize ${}^2$Steklov Mathematical Institute,\\
\normalsize Russian Academy of Sciences, Russia\\
\normalsize \href{mailto:suetin@mi-ras.ru}{suetin@mi-ras.ru}}

\date{ }
\maketitle

\begin{abstract}
The main purpose of this paper is to compare the convergence properties of Padé approximants and rational Hermite--Padé approximants for some model class of multivalued analytic functions based of the inverse Zhoukovsky transform. We prove that in the class of analytic functions under consideration the rational Hermite--Padé approximants converge faster than the corresponding Padé approximants.

In contrast  to the classical vector potential-theoretic approach, which was introduced by A. A. Gonchar and E. A. Rakhmanov in 1981 and developed later by A. I. Aptekarev, V. N. Sorokin and others, the proofs here are based on some scalar mixed Green-logarithmic potential problems.

Bibliography: \cite{Tre23} titles.

\medskip
{\sl Keywords: analytic extension, Padé approximants, Hermite-Padé polynomials}
\end{abstract}

\footnotetext[0]{The work of the second author was performed at the Steklov International Mathematical Center and supported by the Ministry of Science and Higher Education of the Russian Federation (agreement no. 075-15-2025-303).}

\section{Introduction and Statement of the Problem}\label{s1}

\subsection{}\label{s1s1}
Let a function $f(z)\in\HH(\infty)$ be given by the following explicit representation:
\begin{equation}
f(z):=\[\(A-\frac1{\varphi(z)}\)\(B-\frac1{\varphi(z)}\)\]^{-1/2},
\quad z\notin[-1,1],
\label{funzhu}
\end{equation}
where $1<A<B$, $\varphi(z)=z+(z^ 2-1)^{1/2}$, and such branch of the function $(\cdot)^{1/2}$ is chosen that $\varphi(z)\sim 2z$ and $f(z)\sim1/\sqrt{AB}>0$ as $z\to\infty$. The function $f$ is an algebraic function of the fourth order with exactly four branch points $\{\pm1,a,b\}$ of square root type each, and with $a=(A+1/A)/2$ and $b=(B+1/B)/2$, $1<a<b$. The segment $ E: = [-1,1]$  is the Stahl compact set  $S (f)$ for the given function \eqref {funzhu} and the domain $D:=\myh{\CC}\setminus{E}$ is the corresponding Stahl domain \cite{Sta97b}. We  denote this class of analytic functions given by \eqref{funzhu}  with arbitrary $1<A<B$ over $\mathscr Z (E)$.

In the paper \cite {Sue18c} was proven that each function  $f$ from the class $\mathscr Z(E)$ is a Markov type function  and  the corresponding pair $ f, f ^ 2$, as well as the triple $ f, f^2,f^3$, form Nikishin systems  (see \cite{NiSo88}, \cite{ApLy21}):
\begin{align}
f(z)&=\frac1{\sqrt{AB}}+\myh{\sigma}(z), & \supp{\sigma}&=E,\label {repf}\\
f^2(z)&=\frac1{AB}+\frac1{\sqrt{AB}}\myh{\sigma}(z) +\myh{s}_1(z), & \supp{s_1}&=E,\label {repf2}\\
f^3(z)&=\frac1{\sqrt{(AB)^3}}+\frac1{AB}\myh{\sigma}(z)+\frac1{\sqrt{AB}}\myh{s}_1(z)+\myh{s}_2(z), & \supp{s_2}&=E,
\label {repf3}
\end{align}
where $\supp\sigma=\supp{s}_1=\supp{s}_2=E$, $s_1=\<\sigma,\sigma_2\>$, i.e., $ds_1(x):=\myh{\sigma}_2(x)\,d\sigma(x)$, $\supp{\sigma_2}=[a,b]$, and $s_2:=\<\sigma,\<\sigma_2,\sigma\>\>$. The measures $\sigma$ and $\sigma_2$ both admit  explicit representations, see \cite[equations (16)--(17)]{Sue18c}.
Thus the function $f\in\mathscr Z(E)$ is a holomorphic function in the domain $D=\myh{\CC}\setminus E$. The same is valid for the functions $f^2$ and $f^3$.

\fancyhf{}
\fancyhead[LE]{\small\thepage}
\fancyhead[CE]{\small N. R. Ikonomov, S. P. Suetin}
\fancyhead[RO]{\small\thepage}
\fancyhead[CO]{\small On Convergence of Rational Hermite--Padé Approximants}

Recall that for a positive Borel measure 
 $\varkappa$, $\supp{\varkappa}\Subset\RR$, we denote by
\begin{equation}
\myh{\varkappa}(z):=\int\frac{d\varkappa(x)}{z-x},\quad z\in\myh{\CC}\setminus\supp{\varkappa},
\label{Marfun}
\end{equation}
the corresponding Markov type function, from now on we shall assume that in the representations of type \eqref{Marfun} the measure $\varkappa$ is such that $\varkappa\,'(x)=d\varkappa(x)/dx>0$ almost everywhere (a.e.) on $\supp{\varkappa}$.

Also recall (see \cite{GoRaSo97}) that for two measures $\varkappa$ and $\nu$, $\supp{\varkappa}, \supp{\nu}\Subset\RR$, 
$\supp{\varkappa}\cap\supp{\nu}=\varnothing$, it is defined
$$
d\<\varkappa,\nu\>(x):=\myh{\nu}(x)\,d\varkappa(x),
\quad
\myh{\<\varkappa,\nu\>}(z):=
\int\frac{\myh{\nu}(x)\,d\varkappa(x)}{z-x}.
$$

For an arbitrary compact set  $K\subset\RR$, with capacity $\mcap{K}>0$,   denote by  $M_1(K)$ the set of all probability measures supported on $K$. For a compact set  $S\subset\RR$ denote by $g_S(\zeta, z)$  the Green function for the domain $\myh {\CC}\setminus{S}$  with logarithmic singularity at the point $\zeta=z$.   Let $V ^\mu(z)$ be logarithmic potential of a measure $\mu\in M_1(K)$ and $G ^\mu_S(z)$  be the Green potential of the measure $\mu\in M_1(K)$ with respect to the Green function  $g_S(\zeta,z)$, $S\cap K=\varnothing$:
\begin{equation}
V^\mu(z):=\int_K\log\frac1{|z-t|}\,d\mu(t),\quad
G^\mu_S(z):=\int_K g_S(t,z)\,d\mu(t).
\label{pot}
\end{equation}

It is well known \cite{Lan66}, \cite{SaTo97} that if $K$ is a regular compact set, $\mcap{K}>0$, then there exists a unique probability  measure $\tau ^{\vphantom{p}}_K\in M_1(K)$ with the following property:
\begin{equation}
V^{\tau^{\vphantom{p}}_K}(x)\equiv\gamma^{\vphantom{p}}_K=\const,\quad x\in K.
\label{equrob}
\end{equation}
The measure $\tau^{\vphantom{p}}_K$ is referred to as Robin measure or equilibrium measure for the compact set $K$, the value  $\gamma^{\vphantom{p}}_K$ is called the Robin constant for the compact set $K$. Note that $\supp{\tau_K}=K$. Let  
$$
g_K(z,\infty)=\log|z|+\gamma^{\vphantom{p}}_K+o(1),\quad z\to\infty,
$$
be the Green function for the domain $D =D(K):=\myh{\CC}\setminus{K}$ with the logarithmic singularity at the infinity point $z=\infty$. Then we have
\begin{equation}
g_K(z,\infty)=\gamma^{\vphantom{p}}_K-V^{\tau^{\vphantom{p}}_K}(z).
\label{green}
\end{equation}

For a real parameter $\theta\geq0$ introduce the mixed Green-logarithmic potential $\theta V ^\mu(z) +G ^\mu_F(z)$, $\mu\in M_1(E)$,  $E=[-1,1]$, $F:=[a,b]$. It is well known \cite{GoRaSu91}, \cite{BuSu15} that there exists a unique probability measure $\lambda_E=\lambda_E(\theta)$ supported on the compact set $E$, $\lambda_E(\theta)\in M_1(E)$, with the following property:
\begin{equation}
\theta V^{\lambda_E}(x)+G^{\lambda_E}_F(x)
\equiv c_E(\theta)=\const,\quad x\in E.
\label{equiv}
\end{equation}
Note that for each $\theta$ we have $\supp{\lambda_E(\theta)}=E$.

Let us introduce another mixed Green-logarithmic potential 
$\theta V ^\nu(z) +G ^\nu_E(z)$, $\nu\in M_1(F)$, and consider  the next equilibrium problem  for that potential with an external field  given by the function $g_E(z,\infty)$:
\begin{equation}
\theta V^{\lambda_F}(y)+G^{\lambda_F}_E(y)
+\theta g_E(y,\infty)\equiv c_F(\theta)=\const,\quad y\in F,
\quad \lambda_F\in M_1(F).
\label{equivconj}
\end{equation}
It is well known  \cite{BuSu15}, \cite{Sue25}  that there exists a unique measure  $\lambda_F=\lambda_F(\theta)$ supported on $F$, $\lambda_F(\theta)\in M_1(F)$, which solves the equilibrium problem \eqref{equivconj}. Note that $\supp{\lambda_F}(\theta)=F$.

There exists (see \cite{BuSu15}, \cite{Sue25}) the following connection\footnote{In fact there is the one-to-one correspondence.} between $\lambda_E(\theta)\in M_1(E)$ and $\lambda_F(\theta)\in M_1(F)$:
\begin{equation}
\lambda_F(\theta)=\beta_F(\lambda_E(\theta)),
 \label{balay}
\end{equation}
where $\beta_F(\cdot)$ is the balayage  of a  given measure outside the domain $G:=\myh {\CC}\setminus{F}$ onto its boundary $\partial G=F$.

There are also some other connections between the potential problems \eqref{equiv} and \eqref{equivconj}, which will be given in Section \ref{s2s3} below.

\subsection{}\label{s1s2}
Let $f\in\mathscr Z(E)$, $f\in\HH(\infty)$, $f(\infty)=1/\sqrt{AB}$, 
\begin{equation}
N=2n+1=3m+1=4\ell+1.
\label{eqN}
\end{equation}

For the pair of functions $f,f ^ 2$  define type II Hermite--Padé polynomials  (see \cite{NiSo88})  $P ^ {(2)}_{2m,0}\not\equiv 0$, $P ^{(2)}_{2m,1}$ and $P^{(2)}_{2m,2}$ of degree $\leq 2m$, $\mdeg{P ^{(2)}_{2m,j}}\leq{2m}$, from the relations:
\begin{equation}
\begin{aligned}
\bigl(P^{(2)}_{2m,0}f-P^{(2)}_{2m,1}\bigr)(z)
&=O(z^{-m-1}),\quad z\to\infty,\\
\bigl(P^{(2)}_{2m,0}f^2-P^{(2)}_{2m,2}\bigr)(z)
&=O(z^{-m-1}),\quad z\to\infty.
\end{aligned}
\label{hepa2}
\end{equation}
Similarly for the triple of functions $f,f ^ 2,f ^ 3$   define  type II Hermite--Padé polynomials $P^{(3)}_{3\ell,0}\not\equiv0,P^{(3)}_{3\ell,1}$, $P^{(3)}_{3\ell,2}$, $P^{(3)}_{3\ell,3}$ of degree $\leq 3\ell$, $\mdeg{P^{(3)}_{3\ell,j}}\leq{3\ell}$, from the following relations:
\begin{equation}
\begin{aligned}
\bigl(P^{(3)}_{3\ell,0}f-P^{(3)}_{3\ell,1}\bigr)(z)
&=O(z^{-\ell-1}),\quad z\to\infty,\\
\bigl(P^{(3)}_{3\ell,0}f^2-P^{(3)}_{3\ell,2}\bigr)(z)
&=O(z^{-\ell-1}),\quad z\to\infty,\\
\bigl(P^{(3)}_{3\ell,0}f^3-P^{(3)}_{3\ell,3}\bigr)(z)
&=O(z^{-\ell-1}),\quad z\to\infty.
\end{aligned}
\label{hepa3}
\end{equation}

Note that under the condition \eqref{eqN} the polynomials $P^{(2)}_{2m,j}$, $j=0,1,2$, and $P^{(3)}_{3\ell,k}$, $k=0,1,2,3$, as well as  Padé polynomials $P^{(1)}_{n,0}$, $P^{(1)}_{n,1}$, 
\begin{equation}
\bigl(P^{(1)}_{n,0}f-P^{(1)}_{n,1}\bigr)(z)
=O\(\frac1{z^{n+1}}\),\quad z\to\infty,
\label{padedef}
\end{equation}
are determined by the first $N$  coefficients $c_0,\dots,c_{N-1}$ of the Laurent expansion of $f\in\mathscr Z(E)$ at the infinity point:
\begin{equation}
f(z)=c_0+\frac{c_1}z+\dots+\frac{c_{N-1}}{z^{N-1}}+\dotsb,
\quad c_0=\frac1{\sqrt{AB}}.
\label{fexp}
\end{equation}
For an arbitrary polynomial $Q\not\equiv0$ let 
$$
\chi(Q):=\sum_{\eta:Q(\eta)=0}\delta_\eta,
$$
where each zero $\eta$  of the polynomial $Q$ is considered by taking into account its multiplicity.
The set of all zeros itself of $Q$ is denoted by $Z(Q)$.

From the work of \cite{ApLy10} follows that as $N\to\infty$
$$
\frac1{2m}\chi(P^{(2)}_{2m,j})\overset{*}\longrightarrow
\lambda_E(3)\quad\text{and}\quad
\frac1{3\ell}\chi(P^{(3)}_{3\ell,k})\overset{*}\longrightarrow
\lambda_E(1),
$$
where ``$\overset{*}\longrightarrow$'' means convergence in the space of measures in weak-$*$ topology, $j=0,1,2$, $k=0,1,2,3$.

The main results of the current paper are the following statements announced for the first time in \cite{DoIkKnSu23} (see also Corollary \ref{cor2}).

\begin{theorem}\label{the1}
Let $f\in\mathscr Z(E)$. Then uniformly inside\footnote{That is on the compact subsets.} of the domain $D=\myh{\CC}\setminus{E}$
the following relation holds as $N\to\infty$: 
\begin{equation}
\lim_{N\to\infty}\frac1N\log\biggl| f(z)-\frac{P^{(2)}_{2m,1}(z)}{P^{(2)}_{2m,0}(z)}\biggr| = -\frac13G^{\lambda_F(3)}_E(z)-g_E(z,\infty)<0.
\label{hp2conv}
\end{equation}
\end{theorem}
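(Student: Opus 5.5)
The approach is the standard one: write the error through the remainder function and get $n$-th root asymptotics for both the denominator and the remainder. Put $Q:=P^{(2)}_{2m,0}$ and $R(z):=(Qf-P^{(2)}_{2m,1})(z)$. Then
$$
f-\frac{P^{(2)}_{2m,1}}{Q}=\frac{R}{Q},
$$
so it suffices to find $\lim\frac1N\log|R|$ and $\lim\frac1N\log|Q|$ uniformly on compact subsets of $D$, away from the zeros of $Q$.

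\textbf{Denominator.} By the Nikishin structure \eqref{repf}--\eqref{repf2}, the standard orthogonality argument shows that $Q$ has exactly $2m$ simple zeros in the interior of $E$. It satisfies the mixed orthogonality
$$
\int_E Q(x)\,x^k\bigl(c+\myh{\sigma}_2(x)\bigr)\,d\sigma(x)=0
$$
for appropriate linear combinations and $k<2m$; here one uses that $f^2-f/\sqrt{AB}$ involves $\myh{s}_1=\myh{\<\sigma,\sigma_2\>}$. Together with the weak-$*$ convergence $\frac1{2m}\chi(Q)\to\lambda_E(3)$ from \cite{ApLy10}, this gives $\frac1N\log|Q(z)|\to-\frac23V^{\lambda_E(3)}(z)$ locally uniformly in $D$, since $2m/N\to 2/3$.

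\textbf{Remainder.} First I would derive a Cauchy-type integral for $R$. Since $R=O(z^{-m-1})$, for any polynomial $p$ with $\mdeg p\le m$ we have
$$
R(z)=\frac1{p(z)}\int_E\frac{p(x)Q(x)\,d\sigma(x)}{z-x}.
$$
The next step is to choose $p$ so that $R$ is controlled by a second-level object living on $F=[a,b]$. The natural choice is to exploit the second remainder $R_2=Qf^2-P^{(2)}_{2m,2}$ and the combination $R_2-f\cdot R$. This combination is holomorphic across $E$ up to the jump, and its zeros, $m$ of them, accumulate on $F$ with limit distribution $\lambda_F(3)$ in the normalization $\frac1{m}$; this is the content of the balayage relation \eqref{balay}. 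I would then represent $R$ as
$$
R(z)=\frac{1}{\Pi(z)}\int_E\frac{\Pi(x)Q(x)\,d\sigma(x)}{z-x},
$$
where $\Pi$ is the polynomial of degree $m$ with zeros at those points near $F$. This yields
$$
\frac1N\log|R(z)|\to \frac13V^{\lambda_F(3)}(z)-\frac23V^{\lambda_E(3)}(z)-\frac13V^{\lambda_F(3)}(z)\Big|_{\text{const}}\ \text{(from the integral)},
$$
where the integral's $n$-th root is computed via the equilibrium relation \eqref{equiv} for the weight $|\Pi Q|^{1/N}$ on $E$, together with a Stahl–Totik regularity argument for $\sigma$, since $\sigma'>0$ a.e.

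\textbf{Identifying the limit.} Subtracting the two asymptotics gives a harmonic expression in $D\setminus F$ that has to be matched with $-\frac13G^{\lambda_F(3)}_E-g_E(\cdot,\infty)$. I would use \eqref{equivconj} with $\theta=3$ and \eqref{balay} to rewrite $V^{\lambda_F}-V^{\lambda_E}$ in terms of $G^{\lambda_F}_E$ plus constants and $g_E$. The constants are then fixed by checking the behaviour at $\infty$: the total order of vanishing is $N=2m+m+1$, which is what produces the coefficient of $g_E$ being $1$.

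\textbf{Main obstacle.} The hardest step is the strong asymptotics of the second-level zeros, that is, proving that the auxiliary zeros lie near $F$ with distribution $\lambda_F(3)$ and not just weakly. It is also delicate to show that the limit holds uniformly across $F$ itself, where the $\Pi$-representation has poles. There I would first try to use the Nikishin interlacing for the zeros of $R_2-fR$ on $[a,b]$, together with the identity $\lambda_F=\beta_F(\lambda_E)$. The principle-of-descent / lower-envelope arguments then give uniformity, and the resulting limit function is continuous across $F$, as is the right-hand side of \eqref{hp2conv}. Negativity of the limit is immediate since both terms are positive in $D$.
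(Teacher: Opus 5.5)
Your skeleton matches the paper's: the error is $R/Q$, the zeros of $Q$ are distributed by $\lambda_E(3)$, an auxiliary set of $m$ points on $F$ is distributed by $\lambda_F(3)$, and the limit is identified through \eqref{connect2}. The central step, however, is set up incorrectly: the $m$-th root asymptotics of $R$.

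\textbf{The function with zeros on $F$, and the representation of $R$.} The function carrying $m$ zeros on $F$ is $R$ itself, not $R_2-fR=fP^{(2)}_{2m,1}-P^{(2)}_{2m,2}$. The latter has the nonzero jump $P^{(2)}_{2m,1}(f_+-f_-)$ across $E$, and nothing in your argument localizes its zeros. The paper integrates $R\,(c_0+\myh{\sigma}_2)\,p$, $p\in\PP_{m-1}$, over a contour around $E\cup F$. The $E$-part vanishes by the $f^2$-orthogonality of $Q$, leaving $\int_F R(y)p(y)\,d\sigma_2(y)=0$, so $R$ changes sign at least $m$ times on $F$. Let $\Omega$ be the monic polynomial vanishing there, with $\frac1m\chi(\Omega)\to\mu_\Omega$ along subsequences.

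This polynomial must be divided out, not multiplied in. $R/\Omega$ is holomorphic in $D$ with a zero of order $\ge 2m+1$ at $\infty$. This gives the full varying-weight orthogonality $\int_E Q(x)x^k\,d\sigma(x)/\Omega(x)=0$, $k<2m$, so by Lemma~\ref{lem2} $4V^{\lambda_E(3)}-V^{\mu_\Omega}\equiv c_3$ on $E$. Then, by Remark~\ref{rem25},
\begin{equation*}
R(z)=\frac{\Omega(z)}{Q(z)}\int_E\frac{Q^2(x)\,d\sigma(x)}{(z-x)\,\Omega(x)},
\end{equation*}
an integral against the sign-definite measure $Q^2\,d\sigma/\Omega$, whose modulus has $m$-th root tending to $e^{-c_3}$ (Remark~\ref{rem24}).

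Your identity $R=\Pi^{-1}\int_E\Pi(x)Q(x)\,d\sigma(x)/(z-x)$ holds for every $\Pi\in\PP_m$, wherever its zeros lie, so it carries no information about $F$. Its integrand $\Pi Q$ also changes sign $2m$ times on $E$, so no equilibrium argument computes its $m$-th root. This is why your limit has the wrong signs. With $\mu_\Omega=\lambda_F(3)$ (see below), the correct limit is $\frac1N\log|R|\to\frac13\bigl(2V^{\lambda_E(3)}-V^{\lambda_F(3)}-c_3\bigr)$. Here $V^{\lambda_F(3)}$ enters negatively because $R$ vanishes on $F$, rather than having poles there.

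\textbf{The distribution of the $F$-zeros.} You assert rather than prove that these zeros are distributed by $\lambda_F(3)$; \eqref{balay} relates two equilibrium measures and says nothing about zeros. The paper substitutes the representation of $R$ back into $\int_F Rp\,d\sigma_2=0$. This gives $\int_F\Omega(y)p(y)\Psi(y)\,d\sigma_2(y)=0$, $p\in\PP_{m-1}$, with $\Psi(y)=Q(y)^{-1}\int_E Q^2(x)\,d\sigma(x)/((y-x)\Omega(x))$, a weight depending on $\Omega$ itself, which Remark~\ref{rem23} allows. Lemma~\ref{lem2} then gives $V^{\mu_\Omega}-V^{\lambda_E(3)}\equiv c_4$ on $F$. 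Hence $\mu_\Omega=\beta_F(\lambda_E(3))=\lambda_F(3)$ and $V^{\lambda_F(3)}=V^{\lambda_E(3)}-G^{\lambda_E(3)}_F+c_4$ in $\myh{\CC}$. Combined with \eqref{connect2} for $\theta=3$ and $c_3+c_4=c_E(3)$ (evaluate on $E$), this turns $\frac13\bigl(4V^{\lambda_E(3)}-V^{\lambda_F(3)}-c_3\bigr)$ into $-\frac13G^{\lambda_F(3)}_E-g_E(\cdot,\infty)$.

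\textbf{Uniformity near $F$.} Your plan for uniformity across $F$ cannot work. The error vanishes at the zeros of $\Omega$, which fill $F$, so $\frac1N\log|f-P^{(2)}_{2m,1}/Q|$ equals $-\infty$ there. Descent arguments give only an upper bound near $F$; the representation above yields uniform convergence on compact subsets of $D\setminus F$.

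\textbf{Minor point.} $Q$ satisfies $\int_E Q\,(p+q\myh{\sigma}_2)\,d\sigma=0$ for $p,q\in\PP_{m-1}$, not orthogonality against $x^k$, $k<2m$, with a single weight. Since you import the $\lambda_E(3)$ limit from Aptekarev--Lysov (which the paper instead re-derives), this does not affect the rest.
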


\begin{theorem}\label{the2}
Let $f\in\mathscr Z(E)$. Then uniformly inside of the domain $D=\myh{\CC}\setminus{E}$
the following relation holds as $N\to\infty$: 
\begin{equation}
\lim_{N\to\infty}\frac1N\log\biggl|f(z)-\frac{P^{(3)}_{3\ell,1}(z)} {P^{(3)}_{3\ell,0}(z)}\biggr| =-\frac12G^{\lambda_F(1)}_E(z)-g_E(z,\infty)<0.
\label{hp3conv}
\end{equation}
\end{theorem}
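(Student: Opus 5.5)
We obtain \eqref{hp3conv} in three steps. First we express the error through the remainder of the Hermite--Padé problem \eqref{hepa3}. Then we find the $n$-th root asymptotics of the polynomial $Q:=P^{(3)}_{3\ell,0}$ and of the remainder $R_1:=Qf-P^{(3)}_{3\ell,1}$. Finally we recast the resulting vector-potential expression as the scalar mixed Green-logarithmic expression in the statement.

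\textbf{Step 1: orthogonality.} Since $f=c_0+\myh\sigma$, the first relation in \eqref{hepa3} and the Cauchy formula give
\[
R_1(z)=\int\frac{Q(x)\,d\sigma(x)}{z-x}=O(z^{-\ell-1}),\qquad \int x^kQ(x)\,d\sigma(x)=0,\quad k<\ell .
\]
The other two relations, together with \eqref{repf2}--\eqref{repf3} and the Nikishin structure $s_1=\langle\sigma,\sigma_2\rangle$, $s_2=\langle\sigma,\langle\sigma_2,\sigma\rangle\rangle$, give the remaining $2\ell$ orthogonality conditions. The standard Nikishin-system argument then applies. All $3\ell$ zeros of $Q$ are simple and lie in $(-1,1)$, so $\mdeg Q=3\ell$. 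Moreover, $\int Q(x)\,d\sigma(x)/(t-x)$, viewed as a function of $t\in F$, has exactly $2\ell$ sign changes on $F$ (one for each additional orthogonality condition). Hence we may write
\[
R_1(z)=\frac{1}{Q(z)\,q(z)}\int\frac{Q^2(x)\,q(x)}{z-x}\,d\sigma(x)\ \text{ (times a bounded factor)},
\]
where $q$ has degree $2\ell$ with zeros on $F$. This gives
\[
f-\frac{P^{(3)}_{3\ell,1}}{Q}=\frac{R_1}{Q}.
\]

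\textbf{Step 2: root asymptotics.} Using the result of \cite{ApLy10} quoted above, $\frac1{3\ell}\chi(Q)\to\lambda_E(1)$. By the same vector equilibrium (Nikishin) theory, the zero-counting measures $\frac1{2\ell}\chi(q)$ converge to a measure on $F$. I expect this limit to be identified, via \eqref{balay} and the equivalence between \eqref{equiv} and \eqref{equivconj} for $\theta=1$, with $\lambda_F(1)$. The key input is the varying-weight orthogonality
\[
\int x^kQ^2(x)\,\frac{d\sigma(x)}{|q(x)|}=0,
\]
together with standard $n$-th root asymptotics for polynomials orthogonal with respect to varying weights (Gonchar--Rakhmanov). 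This yields
\[
\frac1N\log|R_1(z)|\to \text{an explicit combination of } -V^{\lambda_E},\ V^{\lambda_F},\ \text{and constants},
\]
and
\[
\frac1N\log|Q(z)|\to-\tfrac34 V^{\lambda_E(1)}(z),
\]
uniformly on compact subsets of $D$. The latter holds because all zeros of $Q$ lie on $E$.

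\textbf{Step 3: rewriting.} Subtracting the two limits, we use \eqref{green} for $g_E$ and the identity
\[
G^{\nu}_E(z)=V^{\nu}(z)-V^{\beta_E(\nu)}(z)+\const
\]
for the Green potential, where $\beta_E(\nu)$ is the balayage of $\nu$ onto $E$. We then compare with the equilibrium relations \eqref{equiv}--\eqref{equivconj} at $\theta=1$, and check that the result equals $-\frac12G^{\lambda_F(1)}_E(z)-g_E(z,\infty)$ with all constants cancelling. The constants are fixed by comparing the behaviour at $z=\infty$, where the left side of \eqref{hp3conv} is exactly $-\frac{N}{N}\log|z|$ up to a constant. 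Negativity of the limit is immediate, since $G_E^{\lambda_F(1)}>0$ and $g_E(z,\infty)>0$ in $D$.

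\textbf{Main obstacle.} The main obstacle is Step 3 together with the identification of the limit of $\chi(q)/(2\ell)$. One must show that the vector equilibrium data of the Nikishin system reduce exactly to the scalar pair $\lambda_E(1)$ and $\lambda_F(1)=\beta_F(\lambda_E(1))$ from \eqref{equiv}--\eqref{equivconj}, and that the normalizing constants match. I would first do the computation of the constants on the real axis outside $E\cup F$ and then extend it by harmonicity.
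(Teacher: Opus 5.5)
Your skeleton is the same as the paper's: $R_1$ has $2\ell$ zeros on $F$, $Q$ is then orthogonal on $E$ with a varying weight, and a potential computation finishes. As written, though, two steps fail.

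\emph{First, Steps 1--2 rest on an incorrect zero distribution for $Q$, and with it Step 3 cannot work.} The formulas in Step 1 are also wrong. Since $R_1/q$ is holomorphic off $E$ with a zero of order $3\ell+1$ at $\infty$, the correct relations are $\int_E Q(x)x^k\,d\sigma(x)/q(x)=0$ for $k<3\ell$, and $R_1(z)=\frac{q(z)}{Q(z)}\int_E\frac{Q^2(x)\,d\sigma(x)}{(z-x)q(x)}$. Your version, with $q$ inverted, would give $R_1=O(z^{-5\ell-1})$ and hence $\int_E Q^2\,d\sigma=0$. Likewise $\int x^kQ^2\,d\sigma/|q|=0$ is impossible already for $k=0$.

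The more serious problem is the input $\frac1{3\ell}\chi(Q)\to\lambda_E(1)$, which you import from the introduction. It is not what the orthogonality yields. By Lemma~\ref{lem2} and Corollary~\ref{cor5} one gets $3V^{\mu_Q}-V^{\mu_q}\equiv\const$ on $E$, i.e. $\mu_Q=\tfrac13\beta_E(\mu_q)+\tfrac23\tau_E$; this is what the paper's proof and Corollary~\ref{cor3} give. By contrast, \eqref{equiv} at $\theta=1$ together with $\lambda_F(1)=\beta_F(\lambda_E(1))$ gives $2V^{\lambda_E(1)}-V^{\lambda_F(1)}\equiv\const$ on $E$, i.e. $\lambda_E(1)=\tfrac12\beta_E(\lambda_F(1))+\tfrac12\tau_E$, a different measure.

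Even with the correct representation of $R_1$, plugging in $\mu_Q=\lambda_E(1)$ and $\mu_q=\lambda_F(1)$ gives the exponent $\tfrac14\bigl(6V^{\lambda_E(1)}-2V^{\lambda_F(1)}\bigr)+\const$. On $E$ this equals $\tfrac12V^{\lambda_E(1)}+\const$, which is not constant since $\lambda_E(1)\neq\tau_E$. The target $-\tfrac12G^{\lambda_F(1)}_E-g_E(\cdot,\infty)$ vanishes on $E$. Both functions have the same Laplacian on $F$ and the same $-\log|z|$ growth at $\infty$, so they differ by a nonconstant harmonic function in $D$. Step 3 therefore fails for every choice of constants, not just in the constants. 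Nor can the constant be read off at $\infty$: the leading coefficient $\int_E Q(x)x^\ell\,d\sigma(x)$ of $R_1$ is exactly the unknown. The paper gets the constant from Remark~\ref{rem24}.

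\emph{Second, the heart of the theorem, $\frac1{2\ell}\chi(q)\to\lambda_F(1)$, is only "expected".} The relation on $E$ ties $\mu_Q$ to $\mu_q$ but does not determine $\mu_q$; you need an equilibrium condition on $F$. The paper obtains it as follows.

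From \eqref{hepa3} one gets $\int_F R_1(p+r\myh{\sigma})\,d\sigma_2=0$ for all $p,r\in\PP_{\ell-1}$. Take an arbitrary monic $v$ of degree $2\ell-1$ with zeros on $F$; by Remark~\ref{rem23} it may be chosen with the same limit distribution as $q$. Choose $\tilde p,\tilde r$ so that $\tilde p+\tilde r\myh{\sigma}$ vanishes on $Z(v)$; then $\tilde p+\tilde r\myh{\sigma}=\frac{v}{\tilde r}\int_E\frac{\tilde r^2(x)\,d\sigma(x)}{(z-x)v(x)}$ and $\mu_{\tilde r}=\beta_E(\mu_q)$. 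Inserting this and the representation of $R_1$ gives an orthogonality of $q$ on $F$ with a varying weight built from $Q$ and $\tilde r$.

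Lemma~\ref{lem2} then yields $4V^{\mu_q}-3V^{\mu_Q}-V^{\mu_{\tilde r}}\equiv\const$ on $F$. Eliminating $\mu_Q$ and $\mu_{\tilde r}$ leaves $V^{\mu_q}+G^{\mu_q}_E+g_E(\cdot,\infty)\equiv\const$ on $F$, which is \eqref{equivconj} at $\theta=1$, so $\mu_q=\lambda_F(1)$. Finally, $3V^{\mu_Q}-V^{\lambda_F(1)}+G^{\lambda_F(1)}_E+2g_E(\cdot,\infty)$ is harmonic in $D$ and constant on $E$, hence constant. Together with Remark~\ref{rem24} this gives the rate $e^{-G^{\lambda_F(1)}_E-2g_E}$ per $2\ell$.

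If you prefer the vector route, the Gonchar--Rakhmanov--Sorokin problem for the Nikishin system on $E,F,E$ with masses $3,2,1$ reduces in exactly this way to \eqref{equivconj} at $\theta=1$. That reduction has to be carried out, however, and it also gives $\mu_Q\neq\lambda_E(1)$.
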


Note that in \cite{GoRa81}, \cite{GoRaSo97}, \cite{ApLy10} and \cite{ApLy21} much more  general systems than $[1,f,f^2]$ and $[1,f,f^2,f^3]$ were treated. But the results on limit zero distribution of HP-polynomials  were given there in terms of  vector equilibrium problems. Thus, it is rather difficult to compare the properties of solutions of these vector equilibrium problems corresponding to the pair $f,f^2$ and the triple $f,f^2,f^3$ which systems are of different dimensions. 

From Stahl Theory \cite {Sta97b} it directly follows that inside the domain  $D$
\begin{equation}
\lim_{N\to\infty}\frac1N\log
\bigl|f(z)-[n/n]_f(z)\bigr|=-g_E(z,\infty),
\label{padeconv}
\end{equation}
where $[n/n]_f=P^{(1)}_{n,1}/P^{(1)}_{n,0}$, $N=2n+1$, is the $n$th diagonal Padé approximant to the function $f$ at the infinity point.

The main result of \cite{IkSu24} it the following statement establishing the monotony property of the potential
$G_E^{\lambda_F(\theta)}(z)$ of the equilibrium measure $\lambda_F(\theta)\in M_1(F)$ with respect to the parameter  $\theta$    for $\theta\in[1,3]$.

\begin{theorem}[\cite{IkSu24}, Theorem 2]\label{the3}
Let $1\leq\theta_1<\theta_2\leq3$. Then as $z\in{D}$
\begin{equation}
\(1+\frac1{\theta_1}\)G^{\lambda_F(\theta_1)}_E(z)>
\(1+\frac1{\theta_2}\)G^{\lambda_F(\theta_2)}_E(z).
\label{neqgreeng}
\end{equation}
\end{theorem}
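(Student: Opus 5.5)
The statement immediately above is Theorem~\ref{the3}, quoted from \cite{IkSu24}. I would prove it by expressing everything through one harmonic function on $D$ and comparing values on $F$.

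Set $u_\theta(z):=(1+1/\theta)G^{\lambda_F(\theta)}_E(z)$.

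\textbf{Properties of $u_\theta$.} The function $u_\theta$ is harmonic in $D\setminus F$ and vanishes on $E$. It is superharmonic in $D$ and decays at infinity like a bounded Green potential. The difference $u_{\theta_1}-u_{\theta_2}$ is therefore harmonic in $D\setminus F$ and vanishes on $E$.

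\textbf{Reduction via the minimum principle.} Write the difference as a Green potential of the signed measure
\[
\mu:=(1+1/\theta_1)\lambda_F(\theta_1)-(1+1/\theta_2)\lambda_F(\theta_2).
\]
It suffices to show $u_{\theta_1}>u_{\theta_2}$ on $F$: the minimum principle in the open set $D\setminus F$ then gives strict positivity throughout $D$. (The difference is harmonic across $\infty$, nonconstant, and zero on $E$.) Note that $\mu$ has positive total mass $1/\theta_1-1/\theta_2$, which is consistent with this conclusion.

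\textbf{Using the equilibrium relation on $F$.} By \eqref{equivconj}, for $y\in F$,
\[
G^{\lambda_F}_E(y)=c_F(\theta)-\theta\bigl(V^{\lambda_F}(y)+g_E(y,\infty)\bigr).
\]
The bracket is expressed through $\lambda_F$ and $\lambda_E$. By \eqref{balay} and the standard identity $G_F^{\lambda_E}=V^{\lambda_E}-V^{\lambda_F}+\mathrm{const}$ off $F$, we get on $F$ the relation $V^{\lambda_F}=V^{\lambda_E}+\mathrm{const}$. Likewise, $g_E(\cdot,\infty)=\gamma_E-V^{\tau_E}$.

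Hence on $F$,
\[
u_\theta=(1+1/\theta)\bigl[C(\theta)-\theta\bigl(V^{\lambda_E(\theta)}-V^{\tau_E}\bigr)\bigr]
=C'(\theta)-(\theta+1)\bigl(V^{\lambda_E(\theta)}-V^{\tau_E}\bigr).
\]

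\textbf{Differentiating in $\theta$.} I would then differentiate in $\theta$, using the variational characterization of $\lambda_E(\theta)$ from \eqref{equiv}. The derivative $\dot\lambda_E$ is a signed measure of zero mass on $E$ satisfying
\[
\theta V^{\dot\lambda_E}+G_F^{\dot\lambda_E}=\dot c_E-V^{\lambda_E}\quad\text{on }E.
\]
From this the sign of $\partial_\theta u_\theta$ on $F$ should be read off. Alternatively, and more concretely, one can use the explicit algebraic description available in this model class. For each $\theta$ the Cauchy transforms of $\lambda_E$ and $\lambda_F$ are branches of a cubic (three-sheeted) algebraic function on a genus-zero surface, so $u_\theta$ is expressible explicitly. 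For $\theta\in[1,3]$ the comparison on $F$ then reduces to an inequality between elementary functions of $a,b$ and $\theta$.

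\textbf{Main obstacle.} The hard step is proving the sign of $\partial_\theta u_\theta$ on $F$ (equivalently the inequality on $F$) uniformly for $\theta\in[1,3]$. The constants $C'(\theta)$ are not explicit, so one must also compare values at an endpoint, e.g.\ at $y=a$ or via total masses. My first attempt would be the algebraic parametrization: check monotonicity of $u_\theta(y)$ on $F$ using the uniformizing rational map. If that fails to be uniform, I would fall back on a numerical verification over a compact parameter range, which is likely how the restriction $\theta\in[1,3]$ arises.
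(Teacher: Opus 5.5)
The paper does not prove this statement; it quotes it from \cite{IkSu24}. Your proposal therefore has to stand on its own, and it does not: the decisive step is left open.

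Your preliminary steps are correct. The difference $u_{\theta_1}-u_{\theta_2}$ is harmonic in $\myh{\CC}\setminus(E\cup F)$, including at $\infty$. (Note that $u_\theta$ does not decay there; it equals $(1+1/\theta)\int g_E(y,\infty)\,d\lambda_F(\theta)(y)>0$.) It is continuous and vanishes on $E$, so strict positivity on $F$ does propagate to $D$. The rewriting $u_\theta=C'(\theta)-(\theta+1)\bigl(V^{\lambda_E(\theta)}-V^{\tau_E}\bigr)$ on $F$ is also correct.

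However, since $F\subset D$, this reduction is an equivalence. The theorem \emph{is} the inequality $u_{\theta_1}>u_{\theta_2}$ on $F$, and that is exactly what you do not prove. The differentiation step is only announced:
\begin{itemize}
\item differentiability of $\theta\mapsto\lambda_E(\theta)$ is not justified;
\item no sign is extracted from $\theta V^{\dot\lambda_E}+G_F^{\dot\lambda_E}=\dot c_E-V^{\lambda_E}$.
\end{itemize}
What you would need is that the Green potential of the signed measure $\partial_\theta\bigl[(1+1/\theta)\lambda_F(\theta)\bigr]$ is negative on all of $F$. Its total mass $-1/\theta^2$ being negative says nothing pointwise, and the uncontrolled $\dot c_E$ (equivalently $C'(\theta)$) enters the expression. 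Checking a single endpoint $y=a$ or comparing total masses is likewise not an argument for a pointwise inequality on $F$.

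Your ``more concrete'' route rests on a false premise. Push $\lambda_F(\theta)$ forward by $w=\varphi(z)$ to a measure $\widetilde\lambda$ on $[A,B]$, and let $\widetilde\lambda^*$ be its image under $w\mapsto 1/w$. Then \eqref{equivconj} becomes $(\theta+1)V^{\widetilde\lambda}+(\theta-1)V^{\widetilde\lambda^*}+2\theta\log|w|=\mathrm{const}$ on $[A,B]$. The induced boundary-value relations for the Cauchy transforms have monodromy generated by two involutions. Their product has trace $4\kappa^2-2$ with $\kappa=(\theta-1)/(\theta+1)$, so the monodromy group is finite only when $\arccos\kappa\in\pi\mathbb Q$. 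Consequently:
\begin{itemize}
\item at $\theta=3$ ($\kappa=1/2$) one does get a three-sheeted genus-zero structure;
\item at $\theta=1$ ($\kappa=0$), $\widetilde\lambda$ is simply the balayage of $\delta_0$ onto $[A,B]$;
\item for all but countably many $\theta\in(1,3)$ the Cauchy transforms are transcendental, so there is no rational uniformization along the family in which to check monotonicity.
\end{itemize}
A numerical check over $\theta$ and over the non-compact parameter range $1<A<B$ is not a proof either.

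The missing ingredient is a genuine mechanism that yields $u_{\theta_1}>u_{\theta_2}$ on $F$ for every pair $\theta_1<\theta_2$ in $[1,3]$, uniformly in $A<B$. It must control both the unknown equilibrium constants and the $\theta$-dependence of $\lambda_F(\theta)$. That is the entire content of the cited theorem, and your proposal does not supply it.
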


\begin{corollary}\label{cor1}
Set $\theta_1=1$ and $\theta_2=3$. Then from \eqref{neqgreeng} directly follows that 
\begin{equation}
G^{\lambda_F(1)}_E(z)>\frac23 G^{\lambda_F(3)}_E(z),\quad z\in D.
\label{neqgreen13}
\end{equation}
\end{corollary}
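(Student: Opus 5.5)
The plan is to specialize inequality \eqref{neqgreeng} of Theorem~\ref{the3} to the endpoints of the admissible parameter range, $\theta_1=1$ and $\theta_2=3$. This choice is legitimate: Theorem~\ref{the3} requires $1\leq\theta_1<\theta_2\leq3$, and here $1<3$. The only point to check is that $\theta_1=1$ and $\theta_2=3$ are the two values that enter Theorems~\ref{the1} and~\ref{the2}. For these values the prefactors in \eqref{neqgreeng} are
\[
1+\frac1{\theta_1}=2,\qquad 1+\frac1{\theta_2}=\frac43 .
\]
Hence \eqref{neqgreeng} reads
\[
2\,G^{\lambda_F(1)}_E(z)>\frac43\,G^{\lambda_F(3)}_E(z),\qquad z\in D.
\]

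Dividing both sides by the positive constant $2$ preserves the strict inequality and gives $G^{\lambda_F(1)}_E(z)>\frac23G^{\lambda_F(3)}_E(z)$ for all $z\in D$, which is \eqref{neqgreen13}. No further limiting argument or regularity of the measures is needed, since Theorem~\ref{the3} already holds pointwise on all of $D$.

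I also want to note how the corollary is used, since this is presumably its purpose. Halving \eqref{neqgreen13} gives $\frac12G^{\lambda_F(1)}_E>\frac13G^{\lambda_F(3)}_E$ on $D$. Comparing the right-hand sides of \eqref{hp2conv} and \eqref{hp3conv}, this means the rate for the triple $f,f^2,f^3$ is strictly better than the rate for the pair $f,f^2$. Both rates are in turn strictly better than the Padé rate $-g_E(z,\infty)$ in \eqref{padeconv}, because the Green potentials are positive in $D$.

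There is no genuine obstacle here: all of the analytic content lies in Theorem~\ref{the3}, and the corollary is just its evaluation at the endpoints $\theta=1$ and $\theta=3$.
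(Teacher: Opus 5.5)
Your proposal is correct and is exactly the paper's argument: substituting the admissible endpoint values $\theta_1=1$, $\theta_2=3$ into \eqref{neqgreeng} gives $2G^{\lambda_F(1)}_E(z)>\frac43G^{\lambda_F(3)}_E(z)$ on $D$, and dividing by $2$ yields \eqref{neqgreen13}. Your closing remark comparing the rates in \eqref{hp2conv}, \eqref{hp3conv} and \eqref{padeconv} is precisely the content of Corollary~\ref{cor2}.
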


\begin{corollary}\label{cor2}
Set
\begin{equation}
\begin{aligned}
\delta_1(z)&:=\exp\bigl\{-g_E(z,\infty)\bigr\},\\
\delta_2(z)&:=\exp\biggl\{-\frac13G^{\lambda_F(3)}_E(z)-g_E(z,\infty)\biggr\}, \\
\delta_3(z)&:=\exp\biggl\{-\frac12G^{\lambda_F(1)}_E(z)-g_E(z,\infty)\biggr\}.
\end{aligned}
\label{deldef}
\end{equation}
Then from \eqref{deldef} and \eqref{neqgreen13} it follows that 
\begin{equation}
0<\delta_3(z)<\delta_2(z)<\delta_1(z)<1,\quad z\in D.
\label{delrel}
\end{equation}
\end{corollary}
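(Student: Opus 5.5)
The inequalities \eqref{delrel} follow from comparing the exponents in \eqref{deldef}; since $\exp$ is strictly increasing, it suffices to show, for every $z\in D$,
$$
0<g_E(z,\infty)<\frac13G^{\lambda_F(3)}_E(z)+g_E(z,\infty)<\frac12G^{\lambda_F(1)}_E(z)+g_E(z,\infty)<+\infty .
$$
The outermost inequalities $0<\delta_3$ and $\delta_1<1$ are immediate. For $z\in D=\myh{\CC}\setminus E$ the Green function $g_E(z,\infty)$ is strictly positive, because $E=[-1,1]$ is regular and $g_E(\cdot,\infty)$ vanishes only on $E$. Moreover, all quantities are finite for $z\in D$, $z\neq\infty$. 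At $z=\infty$ we have $g_E=+\infty$, so all $\delta_j(\infty)=0$. The statement should therefore be read for $z\in D\setminus\{\infty\}$, or with the convention that the strict inequalities are understood on the finite part of $D$. I will note this as a minor caveat.

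For $\delta_2<\delta_1$ we need $G^{\lambda_F(3)}_E(z)>0$ on $D$. The Green potential $G^{\mu}_E(z)=\int g_E(t,z)\,d\mu(t)$ of a probability measure $\mu$ supported on $F=[a,b]\subset D$ is strictly positive at every $z\in D$. Indeed, $g_E(t,z)>0$ for $t,z\in D$, and $\lambda_F(3)$ is a nonzero positive measure. This holds even at points $z\in F$, where the potential may be $+\infty$; this is harmless, since then $\delta_2(z)=0$. To avoid that, I would rather observe that $\lambda_F$ has full support $F$ with a density, so the potential is finite and continuous.

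For $\delta_3<\delta_2$ we need $\frac12G^{\lambda_F(1)}_E(z)>\frac13G^{\lambda_F(3)}_E(z)$. This is exactly $\frac12$ times Corollary~\ref{cor1}, inequality \eqref{neqgreen13}: multiplying $G^{\lambda_F(1)}_E>\frac23G^{\lambda_F(3)}_E$ by $\frac12$ gives the claim. Corollary~\ref{cor1} itself is Theorem~\ref{the3} with $\theta_1=1$, $\theta_2=3$, since $(1+1)/(1+1/3)=3/2$. No real obstacle arises: the whole content lies in Theorem~\ref{the3}, which we take as given. The only point requiring care is the positivity and finiteness of the Green potentials at points of $F$ and at $\infty$, handled as above.
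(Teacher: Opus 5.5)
Your argument is the paper's own route (monotonicity of $\exp$, positivity of $g_E(\cdot,\infty)$ and of $G^{\lambda_F(3)}_E$ on $D$, and $\tfrac12\times$\eqref{neqgreen13}), and your caveat that $\delta_1(\infty)=\delta_2(\infty)=\delta_3(\infty)=0$ is correct: the strict inequalities hold only on $D\setminus\{\infty\}$. The one weak step is finiteness of $G^{\lambda_F(\theta)}_E$ on $F$, which you do need, because $\delta_2(z)=0$ would \emph{not} be harmless (it would contradict $0<\delta_3<\delta_2$); having a density alone does not give a finite potential, but \eqref{equivconj} makes $\theta V^{\lambda_F(\theta)}+G^{\lambda_F(\theta)}_E$ bounded on $F$, and since $G^{\lambda_F(\theta)}_E-V^{\lambda_F(\theta)}$ is continuous near $F$ and $V^{\lambda_F(\theta)}$ is bounded below there, $G^{\lambda_F(\theta)}_E$ is finite on $F$.
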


\begin{corollary}\label{cor3}
From the proofs of the Theorems \ref{the1} and \ref{the2} it follows that as  $N\to\infty$
$$
\frac1N\chi(P^{(2)}_{2m,0})
\overset{*}\longrightarrow
\lambda_E(3),
\quad 
\frac1N\chi(P^{(3)}_{3\ell,0})
\overset{*}\longrightarrow
\frac13\beta_E(\lambda_F(1))+\frac23\tau_E,
$$
where $\beta_E(\lambda_F(1))$ is the balayage of the measure $\lambda_F(1)$ from $\myh{\CC}\setminus E$ onto $E$,
$\tau_E$ is the probability Chebyshev measure for the set $E$, $d\tau_E=dx/(\pi\sqrt{1-x^2})$, $\supp{\lambda}=E$.
\end{corollary}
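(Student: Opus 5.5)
The plan is to read off the limit zero distributions from the $n$th-root asymptotics of $P^{(2)}_{2m,0}$ and $P^{(3)}_{3\ell,0}$ that the proofs of Theorems \ref{the1} and \ref{the2} must produce on the way to \eqref{hp2conv}--\eqref{hp3conv}. The key point is that a finite positive measure supported on $E=[-1,1]$ is uniquely determined by its logarithmic potential on $D=\myh{\CC}\setminus E$. Indeed, $V^\mu$ is harmonic in $D$ and continuous up to $E$ from either side in the sense of fine limits. Two measures on $E$ with equal potentials in $D$ therefore have equal potentials quasi-everywhere on $E$, hence everywhere, and so coincide.

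\textbf{Step 1 (zeros lie on $E$).} By \eqref{repf}--\eqref{repf3} the systems $[f,f^2]$ and $[f,f^2,f^3]$ are Nikishin systems on $E$ with second measure $\sigma_2$ on $F=[a,b]$. The standard multiple-orthogonality argument (\cite{NiSo88}) then shows that $P^{(2)}_{2m,0}$ and $P^{(3)}_{3\ell,0}$ have full degree and simple zeros, all lying in $(-1,1)$. The normalized zero counting measures therefore form a precompact family of measures on $E$. Since $\mdeg P^{(2)}_{2m,0}/N=2m/N\to2/3$ and $\mdeg P^{(3)}_{3\ell,0}/N\to3/4$, their total masses converge to these constants. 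I will state the limits with these masses, which fixes the normalization in the corollary.

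\textbf{Step 2 (potential of the denominators).} In the proofs of Theorems \ref{the1} and \ref{the2} the error is written as $f-P_{1}/P_{0}=R_N/P_0$. Here $R_N$ is the remainder function, and its $n$th-root behaviour is controlled through the orthogonality relations and the mixed Green-logarithmic problems \eqref{equiv}--\eqref{equivconj}. I will extract from these proofs the separate limit $\lim\frac1N\log|P_0(z)|$ locally uniformly in $D$.

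\begin{itemize}
\item In the first case this limit equals $-\frac23V^{\lambda_E(3)}(z)$ (up to an additive constant). This is consistent with the result of \cite{ApLy10} quoted before Theorem \ref{the1}.
\item In the second case the identification is a computation in $D$. Use $G^{\lambda_F}_E(z)=V^{\beta_E(\lambda_F)}(z)-V^{\lambda_F}(z)+\const$ together with \eqref{green}, $g_E(z,\infty)=\gamma_E-V^{\tau_E}(z)$. Then combine the exponent $-\frac12G^{\lambda_F(1)}_E-g_E$ of \eqref{hp3conv} with the asymptotics of $R_N$, which is governed by $\lambda_F(1)$ and $g_E$.
\end{itemize}

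In the second case the $V^{\lambda_F(1)}$ terms should cancel, leaving $\frac1N\log|P^{(3)}_{3\ell,0}|\to -V^{\mu}(z)+\const$. Here $\mu$ is a combination of $\beta_E(\lambda_F(1))$ and $\tau_E$. Balancing total mass and the $\log|z|$ behaviour at infinity gives the weights $\frac13$ and $\frac23$ relative to the probability normalization, scaled by $\frac34$.

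\textbf{Step 3 (uniqueness).} Any weak-$*$ limit point $\mu^*$ of $\frac1N\chi(P_0)$ satisfies $\frac1N\log|P_0|\to-V^{\mu^*}$ locally uniformly in $D$, since the zeros lie on $E$ and $\log|z-x|$ is continuous for $z\in D$, $x\in E$. By Step 2 this potential equals $V^{\mu}$ in $D$, up to a constant that is forced to vanish by the behaviour at $\infty$. The uniqueness remark above gives $\mu^*=\mu$, and the full sequence converges.

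The main obstacle is Step 2. The theorems only give the asymptotics of the ratio $R_N/P_0$, so one has to separate the contributions of $P_0$ and $R_N$. I would do this through the integral representation of $R_N$ coming from the orthogonality. That representation expresses $R_N$ as $P_0$-weighted integrals against $\sigma$ and $s_1$ (and $s_2$), which yields an independent asymptotic for $R_N$ in terms of $\lambda_F$. The subsequent balayage identity computation is routine.
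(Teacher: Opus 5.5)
Your remark on the normalization is correct. What the proofs give is $\frac1{2m}\chi(P^{(2)}_{2m,0})\overset{*}\longrightarrow\lambda_E(3)$ (this is \eqref{limq2n}) and $\frac1{3\ell}\chi(P^{(3)}_{3\ell,0})\overset{*}\longrightarrow\frac13\beta_E(\lambda_F(1))+\frac23\tau_E$. With the factor $\frac1N$ the limits therefore have masses $\frac23$ and $\frac34$. Steps 1 and 3 are also fine.

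The gap is Step 2, which carries all the content and which you only assert. The mechanism you propose, an ``independent asymptotic for $R_N$ in terms of $\lambda_F$'', is circular. We have $R_{n,1}(z)=\int_E Q(x)\,d\sigma(x)/(z-x)$ (see \eqref{rnsn}), and by \eqref{repRuQ} and \eqref{rate3}
$$
\frac1N\log|R_N|\to-\frac12V^{\lambda_F(1)}+\frac34V^{\mu_Q}+\const,
$$
where $\mu_Q$ is precisely the unknown limit of $\frac1{3\ell}\chi(P^{(3)}_{3\ell,0})$.

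What does have a $P_0$-free $z$-dependence is a product. By \eqref{repRuQ}, $R_{n,1}Q_{3n}/u_{2n}=\int_E Q_{3n}^2(x)\,d\sigma(x)/((z-x)u_{2n}(x))$ has $n$th root modulus tending to a constant. The same holds for $R_{n,1}Q_{2n}/\Omega_n$, see \eqref{repR}. Here $u_{2n}$ (resp. $\Omega_n$) collects the zeros of $R_{n,1}$ on $F$. To use this you need the limit distribution $\lambda_F(1)$ (resp. $\lambda_F(3)$) of these zeros, and the paper obtains it only from the coupled system \eqref{equQu2}--\eqref{equuQr} (resp. \eqref{equQOm}--\eqref{equOmQ}).

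But the first relation of that system, Lemma~\ref{lem2} applied to the varying-weight orthogonality \eqref{ortQu}, is already $3V^{\mu_Q}-V^{\lambda_F(1)}\equiv\const$ on $E$. Corollary~\ref{cor5} with $t=\frac13$ then gives $\mu_Q=\frac23\tau_E+\frac13\beta_E(\lambda_F(1))$ at once. In the pair case, $\lambda_E(3)$ is identified directly from \eqref{iden1}. That is the paper's argument: the zero distribution of $P_0$ is an intermediate output of the GRS analysis, not something recovered afterwards from the error.

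The identification you sketch is also wrong as written. By \eqref{cpsi}, $G^{\mu}_E=V^{\mu}-V^{\beta_E(\mu)}+\const$, the opposite sign to yours. The cancellation of the $V^{\lambda_F(1)}$ terms you anticipate depends on exactly this sign; with yours they add up in the computation below. Moreover, total mass and the $\log|z|$ behaviour at $\infty$ are one and the same constraint on the two weights, so they cannot produce $\frac13$ and $\frac23$. The weights come from the coefficients of $G^{\lambda_F(1)}_E$ and $g_E$ in \eqref{hp3conv}.

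Your route can be repaired. Replace $R_N$ by $R_NP_0/u_{2n}$, import from the proof that the normalized zero counting measures of $u_{2n}$ tend to $\lambda_F(1)$, and correct the sign. Then in $D$
$$
\frac2N\log|P^{(3)}_{3\ell,0}|\to-\frac12V^{\lambda_F(1)}+\frac12G^{\lambda_F(1)}_E+g_E+\const
=-\frac12V^{\beta_E(\lambda_F(1))}-V^{\tau_E}+\const ,
$$
and Step 3 gives the limit $\frac14\beta_E(\lambda_F(1))+\frac12\tau_E$ of $\frac1N\chi(P^{(3)}_{3\ell,0})$. Similarly, in the pair case you get $\frac16\beta_E(\lambda_F(3))+\frac12\tau_E$, which equals $\frac23\lambda_E(3)$ by \eqref{equQOm} and Corollary~\ref{cor5} with $t=\frac14$. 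This uses strictly more input than the paper, which simply reads the measure off \eqref{equQu2} (resp. \eqref{iden1}).
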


Recall (see \cite{GoRaSu91} and cf. \cite{MaMe24}), that the equilibrium measure   $\lambda_E(0)$ and the corresponding function $G ^ {\lambda_E(0)}_F$ are connected with the best (Chebyshev) rational approximations 
of degree $n$ to Markov type function $\myh{\varkappa}(z)$ on the segment $E=[-1,1]$, $\supp{\varkappa}=F=[a,b]$. However these approximants are not constructive   ones in the sense of the paper by P. Henrici \cite[Sec. 2]{Hen66}. For more discussion about constructive approximation see also \cite{Ara84}, \cite{Sta06}, \cite{Tre23}, \cite{AnAv24} and the bibliography therein. Remark also that very often the polynomials $P^{(2)}_{2m,0}$ and $P^{(3)}_{3\ell,0}$ are called multiple orthogonal polynomials and  are treated without connection with constructive rational approximations, see \cite{ApLy10}, \cite{ApLy21}, \cite{Lys20}, \cite{Lys24}, \cite{ApNo25}, \cite{Apt26}.

Now suppose that (see equation \eqref{eqN}):
\begin{equation}
N=2n+1=3m+2=4\ell+3.
\label{qeNN}
\end{equation}
Then based on the $N$ Laurent coefficients $c_0,\dots,c_{N-1}$ of $f$ at the infinity point it is possible to construct (see \cite{NiSo88}, \cite{IkSu21}) type I Hermite--Padé polynomials  $Q^{(2)}_{m,j}$, $j=0,1,2$ of degree $m$ for the tuple $[1,f,f^2]$, as well as type I Hermite--Padé polynomials  $Q^{(3)}_{\ell,k}$, $k=0,1,2,3$ of degree $\ell$ for the tuple $[1,f,f^2,f^3]$:
\begin{align}
(Q^{(2)}_{m,0}+Q^{(2)}_{m,1}f+Q^{(2)}_{m,2}f^2)(z)&=O\(\frac1{z^{2m+2}}\),&& z\to\infty,\label{eqhp2}\\
(Q^{(3)}_{\ell,0}+Q^{(3)}_{\ell,1}f+Q^{(3)}_{\ell,2}f^2+Q^{(3)}_{\ell,3}f^3)(z)&=O\(\frac1{z^{3\ell+3}}\),&& z\to\infty.
\label{eqhp3}
\end{align}
Based on the {\it rational} Hermite--Padé approximants, corresponding to $Q^{(2)}_{m,j}$ and $Q^{(3)}_{\ell,k}$, it is possible (see \cite{Sue18d}, \cite{Kom21}) to recover the values $f(z^{(1)})$ and $f(z^{(2)})$ of a multivalued function $f\in\mathscr Z(E)$, $f(z^{(0)}):=f(z)$, $z\in D$, on the other sheets of its four sheeted Riemann surface (see \cite{KoPaSuCh17}, \cite{Sue18d}, \cite{IkSu21}, \cite{Kom21}, \cite{Sue23}, \cite{Sue24}). It is also possible \cite{IkSuHePa21}, \cite{KoPa24} to recover the branch points of $f$ with the discriminants corresponding to \eqref{eqhp2} and \eqref{eqhp3}:
\begin{align}
D_m^{(2)}(z)&:=(Q^{(2)}_{m,1}(z))^2-4Q^{(2)}_{m,0}(z)Q^{(2)}_{m,2}(z),\label{eqdis2}\\
D_\ell^{(3)}(z)&:=\Bigl(18Q^{(3)}_{\ell,3}Q^{(3)}_{\ell,2}Q^{(3)}_{\ell,1}Q^{(3)}_{\ell,0}-4(Q^{(3)}_{\ell,2})^3Q^{(3)}_{\ell,0}\nonumber\\
&\phantom{:}+(Q^{(3)}_{\ell,2})^2(Q^{(3)}_{\ell,1})^2-4Q^{(3)}_{\ell,3}(Q^{(3)}_{\ell,1})^3-27(Q^{(3)}_{\ell,3})^2(Q^{(3)}_{\ell,0})^2\Bigr)(z).\label{eqdis3}
\end{align}
We propose to discuss these results concerning the type I Hermite--Padé polynomials in the next paper.

Finally, note that starting from the seminal paper by A. A. Gonchar and E. A. Rakhmanov \cite{GoRa81} all the problems concerning the so-called weak asymptotics of Hermite--Padé polynomials for Markov type functions were treated by the vector potential-theoretic method (see \cite{GoRaSo97}, \cite{ApLy10}, \cite{Lys20}, \cite{ApLy21}, \cite{Lys24}).  In contrast to that method, in the current paper we use an approach based on the scalar equilibrium problems \eqref{equiv} and \eqref{equivconj}.

\begin{remark}\label{rem11}
If we consider the complex case, i.e. the class $\CC(z,f)$  of analytic functions for a fixed function $f\in\mathscr Z(E)$ defined  by \eqref{funzhu} then  clearly Theorems~\ref{the1} and \ref{the2} are still valid. However the proofs become more complicated, since there are no interpolation properties, which take place in the real case situation (see, for example, equations \eqref{pqsig} and \eqref{repROm}).
\end{remark}

\begin{remark}\label{rem12}
The situation becomes much more complicated  if we change the geometrical  component of the problem under consideration, i.e. admit the points $a$ and $b$ to be complex but not real. In  general, in this situation for the pair $f,f^2$ and the triple $f,f^2,f^3$ instead of the segments $E=[-1,1]$ and $F=[a,b[$ there appear two different Nuttall's condensers (see \cite{RaSu13}) each of which consists of two  pairs of $S$-curves (see \cite{Rak12}).  As usual, to describe explicitly these $S$-properties it is necessary to apply variational methods (see \cite{Rak12}, \cite{Dub24}). However, the corresponding domains of convergence became different from each other and it is impossible to compare the rates of convergence in the sense of Theorems~\ref{the1} and \ref{the2}.
\end{remark}

\section{Some Auxiliary Results}\label{s2}

\subsection{}\label{s2s1}
In the current section we state some auxiliary results which we need for the proofs of Theorems~\ref{the1} and  \ref{the2}. Note that in general these results are well-known, see \cite{GoRa87}, \cite{BuSu15}, \cite{Rak16}. However for the purpose of the current paper we need these results in specific forms.

Let $\mu\in M_1(E)$ and 
\begin{equation}
J(\mu):=\iint_{E\times E}
\log\frac1{|x-y|}\,d\mu(x)d\mu(y)
=\int_E V^\mu(x)\,d\mu(x)
\label{enelog}
\end{equation}
be the logarithmic energy of a measure $\mu$.
Let  $M^\circ_1(E)$ be the set of all measures from $M_1(E)$ such that $J(\mu)<\infty$. 
Recall that when $\mu,\nu\in M^\circ_1(E)$ then the corresponding value $[\mu,\nu]$ is finite (see \cite[Chapter II, Section 5, p. 127, Lemma 5.4]{SaTo97} and Remark \ref{rem22} below):
\begin{equation}
[\mu,\nu]:=\iint_{E\times E}\log\frac1{|x-y|}\,d\mu(x)d\nu(y)
=[\nu,\mu]<\infty,
\label{enemut}
\end{equation}
and we have also that $J(\mu-\nu)\geq0$ with equality for the case when $\mu=\nu$ only (the positiveness of the logarithmic kernel, see  \cite{Lan66}, \cite{SaTo97}, \cite{Chi18}, \cite{Chi19}, \cite{Chi20}).

The following result follows directly from \cite[\S 3, Lemma 6]{GoRa87}.

\begin{lemma}\label{lem1} Let $\psi(x)$ be a continuous function given on $E$. Then there exists a unique probability measure $\lambda=\lambda_{\equ}=\lambda(\psi)$ supported on $E$ with the following property:
\begin{equation}
V^{\lambda}(x)+\psi(x)
\begin{cases}
\leq c(\psi),& x\in \supp{\lambda},\\
\geq c(\psi), & x\in E\setminus\supp{\lambda},
\end{cases}
\label{equpro}
\end{equation}
with some constant $c(\psi)=\const$.
\end{lemma}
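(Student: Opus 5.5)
We prove Lemma~\ref{lem1} by the standard variational method for a logarithmic energy with an external field (cf. \cite{SaTo97}, \cite{GoRa87}). Introduce the weighted energy functional
$$
I_\psi(\mu):=J(\mu)+2\int_E\psi(x)\,d\mu(x),\quad \mu\in M_1(E).
$$
Since $\psi$ is continuous on the compact set $E$, it is bounded. Since $\mcap E>0$, the functional is not identically $+\infty$; for example $I_\psi(\tau_E)<\infty$. The kernel $\log(1/|x-y|)$ is bounded below on $E\times E$, so $I_\psi$ is bounded below on $M_1(E)$.

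Take a minimizing sequence $\mu_k$. By weak-$*$ compactness of $M_1(E)$, pass to a convergent subsequence $\mu_k\overset{*}\to\lambda$. Lower semicontinuity of $J$ (the kernel is lower semicontinuous and bounded below) and continuity of $\mu\mapsto\int\psi\,d\mu$ show that $\lambda$ is a minimizer with $J(\lambda)<\infty$, i.e. $\lambda\in M^\circ_1(E)$.

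Uniqueness comes from strict convexity. If $\lambda_1,\lambda_2$ are two minimizers, then
$$
I_\psi\Bigl(\tfrac{\lambda_1+\lambda_2}2\Bigr)=\tfrac12I_\psi(\lambda_1)+\tfrac12I_\psi(\lambda_2)-\tfrac14J(\lambda_1-\lambda_2),
$$
and the positivity $J(\lambda_1-\lambda_2)\geq0$, with equality only if $\lambda_1=\lambda_2$ (recalled before \eqref{enemut}), forces $\lambda_1=\lambda_2$.

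Next derive the inequalities \eqref{equpro} from variations. Set $c:=\int(V^\lambda+\psi)\,d\lambda$, which is finite. For any $\nu\in M^\circ_1(E)$, expanding $I_\psi((1-t)\lambda+t\nu)\geq I_\psi(\lambda)$ as $t\to0^+$ gives $\int(V^\lambda+\psi)\,d\nu\geq c$.

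This yields $V^\lambda+\psi\geq c$ quasi-everywhere on $E$. Indeed, if the set where $V^\lambda+\psi<c-\eps$ had positive capacity, it would carry a measure of finite energy, contradicting the previous inequality.

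In the other direction, suppose $V^\lambda+\psi>c+\eps$ on a set of positive $\lambda$-measure near some $x_0\in\supp\lambda$. Since $V^\lambda$ is lower semicontinuous, that set is relatively open. Combined with $\int(V^\lambda+\psi)\,d\lambda=c$ and the quasi-everywhere lower bound, which holds $\lambda$-a.e. because $\lambda$ has finite energy and so charges no polar set, this is impossible. Hence $V^\lambda+\psi\leq c$ on $\supp\lambda$, everywhere there by lower semicontinuity.

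The step I expect to be the main obstacle is removing the exceptional polar set, i.e. upgrading ``$\geq c$ q.e. on $E\setminus\supp\lambda$'' to ``$\geq c$ everywhere''. Here I would use that $E=[-1,1]$ is regular, so every point of $E$ is non-thin for $E$. I would then apply the lower envelope / fine-limit argument. By lower semicontinuity, $V^\lambda(x)\leq\liminf V^\lambda(y)$ along $y\to x$ outside the polar set, whereas the inequality we need runs the other way. The correct tool is the fine continuity of potentials at regular points: for $x\in E$, $V^\lambda(x)=\liminf$ of $V^\lambda$ over $y\to x$, $y\in E\setminus P$ with $P$ polar, because $E\setminus P$ is non-thin at $x$. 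Together with the continuity of $\psi$ this gives the inequality everywhere on $E$.

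Finally, uniqueness of the measure satisfying \eqref{equpro} itself, as opposed to uniqueness of the minimizer, follows by showing that any $\lambda$ satisfying \eqref{equpro} minimizes $I_\psi$. Integrating against $\nu-\lambda$ gives $\int(V^\lambda+\psi)\,d(\nu-\lambda)\geq0$, and therefore $I_\psi(\nu)-I_\psi(\lambda)\geq J(\nu-\lambda)\geq0$.
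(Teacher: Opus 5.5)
Your existence argument is correct and follows the paper's proof step by step: a minimizing sequence, weak-$*$ compactness and lower semicontinuity, uniqueness of the minimizer from $J(\lambda_1-\lambda_2)\ge0$, the first-variation inequality, the q.e.\ lower bound via the equilibrium measure of a compact set of positive capacity, and $V^\lambda+\psi\le c$ on $\supp\lambda$ by lower semicontinuity.

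\textbf{The gap is in your last step.} From \eqref{equpro} you only know $V^\lambda+\psi\le c$ on $\supp\lambda$ and $V^\lambda+\psi\ge c$ on $E\setminus\supp\lambda$. For a $\nu$ that charges $\supp\lambda$, the inequality $\int(V^\lambda+\psi)\,d\nu\ge c$ therefore does not follow, since the bound on $\supp\lambda$ points the wrong way. This remains true even if you normalize $c=\int(V^\lambda+\psi)\,d\lambda$ as in \eqref{cpsifo}. In fact uniqueness is false for \eqref{equpro} read literally. Take $\psi\equiv0$ and $d\lambda=dx/2$ on $E$. Then $\supp\lambda=E$ and $V^\lambda(x)=1-\frac12\bigl[(1-x)\log(1-x)+(1+x)\log(1+x)\bigr]\le1$ on $E$. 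So \eqref{equpro} holds with $c=1$ (the second line is vacuous), while $\tau_E$ satisfies it with $c=\log2$.

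The paper's own final paragraph makes the same tacit assumption when it writes $\int(V^\lambda+\psi)\,d\nu\ge c'(\psi)$. The remedy is to prove uniqueness in the class of measures with $V^\lambda+\psi\le c$ on $\supp\lambda$ and $V^\lambda+\psi\ge c$ q.e.\ on all of $E$. Then $J(\lambda)<\infty$ by the upper bound, and finite-energy measures do not charge polar sets. Hence $\int(V^\lambda+\psi)\,d\lambda=c\le\int(V^\lambda+\psi)\,d\nu$, and your conclusion $I_\psi(\nu)-I_\psi(\lambda)\ge J(\nu-\lambda)\ge0$ goes through.

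\textbf{On the step you flagged as the main obstacle.} For the inequality on $E\setminus\supp\lambda$ alone, no fine topology is needed. $V^\lambda$ is harmonic, hence continuous, on $\CC\setminus\supp\lambda$, and a polar set contains no interval. So the q.e.\ inequality on the relatively open set $E\setminus\supp\lambda$ extends to every point by continuity. The paper's proof itself stops at the q.e.\ statement.

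Your fine-continuity argument is nevertheless correct: $E\setminus P$ is non-thin at each point of $E$, so $V^\lambda(x)=\liminf_{y\to x,\,y\in E\setminus P}V^\lambda(y)$. It also gives more, namely $V^\lambda+\psi\ge c$ at every point of $E$, hence $V^\lambda+\psi\equiv c$ on $\supp\lambda$, as claimed in Remark~\ref{rem21}. That is exactly the strengthened property under which the uniqueness step is valid. With the lemma stated in that form, your proof is complete.
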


\begin{corollary}\label{cor4}
Let $\psi(x)=-V^\mu(x)$, where $\supp(\mu)\Subset D=\RR\setminus{E}$.
Then $\lambda(\psi)=\beta_E(\mu)$ is the balayage of $\mu$ from $D$ onto $E$,  $\supp{\lambda}=E$ and 
\begin{equation}
V^{\lambda(\psi)}(z)\equiv
V^\mu(z)-G_E^\mu(z)+c(\psi),\quad
c(\psi)=\int g_E(y,\infty)\,d\mu(y).
\label{cpsi}
\end{equation}
\end{corollary}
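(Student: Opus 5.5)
The plan is to identify the balayage $\beta_E(\mu)$ explicitly and then check that it satisfies the characterization \eqref{equpro} of Lemma~\ref{lem1}. Uniqueness in Lemma~\ref{lem1} will then give $\lambda(\psi)=\beta_E(\mu)$.

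First, I recall the standard properties of balayage from $D$ onto $E=[-1,1]$, which is regular. For $\mu$ supported on $\RR\setminus E$ at positive distance from $E$, the balayage $\beta_E(\mu)$ is the probability measure on $E$ obtained by integrating over $y$ the harmonic measures $\omega(y,\cdot)$ of $D$ at the point $y$. It satisfies the potential identity
\[
V^{\beta_E(\mu)}(z)=V^\mu(z)-G^\mu_E(z)+\int g_E(y,\infty)\,d\mu(y),\quad z\in\CC.
\]
I would verify this identity directly. Both sides are harmonic in $D\setminus\{\infty\}$, since the logarithmic singularities of $V^\mu$ and $G^\mu_E$ at $\supp\mu$ cancel. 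Both sides behave like $\log(1/|z|)+o(1)$ at $\infty$: here $G^\mu_E(\infty)=\int g_E(y,\infty)\,d\mu(y)$ by symmetry of the Green function. Finally, both sides agree as distributions, since $\Delta$ of the right-hand side on $\CC$ equals $-2\pi\beta_E(\mu)$. Equivalently, $G^\mu_E(z)=\int g_E(t,z)\,d\mu(t)$ with $g_E(t,z)=\log\frac1{|z-t|}-\int\log\frac1{|z-s|}\,d\omega(t,s)+g_E(t,\infty)$. This is the classical representation of the Green function through harmonic measure, and integrating it in $d\mu(t)$ gives the identity at once. This yields \eqref{cpsi} for $z$ everywhere, with the stated constant $c(\psi)$.

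Next comes the restriction to $E$. Since $E$ is regular, $G^\mu_E(x)=0$ for every $x\in E$. Hence
\[
V^{\beta_E(\mu)}(x)-V^\mu(x)=c(\psi),\quad x\in E,
\]
that is, $V^{\lambda}+\psi\equiv c(\psi)$ on $E$ with $\lambda=\beta_E(\mu)$. Both inequalities in \eqref{equpro} therefore hold, in fact with equality on all of $E$. Here $\psi=-V^\mu$ is continuous on $E$ because $\supp\mu$ is at positive distance from $E$, so Lemma~\ref{lem1} applies and its uniqueness part forces $\lambda(\psi)=\beta_E(\mu)$.

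It remains to show $\supp\beta_E(\mu)=E$. The harmonic measure $\omega(y,\cdot)$ of $\myh\CC\setminus[-1,1]$ has a density that is strictly positive on $(-1,1)$. For $y=\infty$ this density is $1/(\pi\sqrt{1-x^2})$, and for finite $y$ it is given by the Poisson kernel transported via $\varphi$. Integrating in $d\mu(y)$ then keeps the density positive on $(-1,1)$, so the support is all of $E$.

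The main obstacle is the potential identity, specifically the bookkeeping of constants at $\infty$. I would handle it via the Green-function representation above rather than by a maximum-principle argument. This avoids having to control both sides near $\supp\mu$ separately.
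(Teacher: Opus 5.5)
Your proposal is correct and follows essentially the route the paper intends: the paper gives no separate proof of Corollary~\ref{cor4} and treats it as a direct consequence of the uniqueness part of Lemma~\ref{lem1} together with the classical balayage identity $V^{\beta_E(\mu)}=V^\mu-G^\mu_E+\int g_E(y,\infty)\,d\mu(y)$. You supply exactly these ingredients: the identity via the harmonic-measure representation of $g_E(t,z)$, equality on $E$ from $G^\mu_E|_E=0$, and the support claim $\supp\lambda=E$ from the positive density of harmonic measure on $(-1,1)$; your tacit normalization $\mu\in M_1$ is indeed needed, since balayage preserves mass while $\lambda(\psi)$ is a probability measure.
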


\begin{corollary}\label{cor5}
Let $\psi(x)=-tV^\mu(x)$, $t\in(0,1)$, where $\supp(\mu)\Subset D=\RR\setminus{E}$. Then $\lambda(\psi)=(1-t)\tau_E+t\beta_E(\mu)$, where $\beta_E(\mu)$ is the balayage of $\mu$ from $D$ onto $E$,  $\tau_E$ is the probability  Chebyshev measure for the set $E$, $d\tau_E=dx/(\pi\sqrt{1-x^2})$, $\supp{\lambda}=E$, and 
\begin{align}
V^{\lambda(\psi)}(z)&\equiv (1-t)V^{\tau_E}(z)+t(V^\mu(z)-G_E^\mu(z))+c(\psi)\nonumber\\
&= (1-t)\gamma_E-(1-t)g_E(z,\infty)+t(V^\mu(z)-G_E^\mu(z))+c(\psi),\label{psitv}\\
c(\psi)&=(1-t)\gamma_E+t\int g_E(y,\infty)\,d\mu(y),\quad\gamma_E=\log2.\nonumber
\end{align}
\end{corollary}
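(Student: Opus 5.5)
The plan is to guess the candidate measure $\lambda^*:=(1-t)\tau_E+t\beta_E(\mu)$ and check that it satisfies \eqref{equpro} with equality on all of $E$. The uniqueness part of Lemma~\ref{lem1} then identifies it with $\lambda(\psi)$. Corollary~\ref{cor4} is exactly the case $t=1$ of this statement. For $t\in(0,1)$ I use the facts on which Corollary~\ref{cor4} rests: $V^{\beta_E(\mu)}(z)=V^\mu(z)-G^\mu_E(z)+\int g_E(y,\infty)\,d\mu(y)$ on $\myh{\CC}$ (the standard balayage identity, since $\mu$ is compactly supported off $E$). Together with \eqref{green} in the form $V^{\tau_E}(z)=\gamma_E-g_E(z,\infty)$, these are the two ingredients.

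First I check that $\lambda^*$ is admissible. Both $\tau_E$ and $\beta_E(\mu)$ are probability measures on $E$, because balayage onto the boundary of the unbounded domain $\myh{\CC}\setminus E$ preserves total mass. Hence $\lambda^*\in M_1(E)$. Moreover $\supp{\tau_E}=E$, so $\supp{\lambda^*}=E$ for $t<1$.

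Next I compute the potential by linearity:
$$
V^{\lambda^*}(z)=(1-t)\bigl(\gamma_E-g_E(z,\infty)\bigr)+t\Bigl(V^\mu(z)-G^\mu_E(z)+\int g_E(y,\infty)\,d\mu(y)\Bigr).
$$
This is precisely \eqref{psitv}, with the constant $c(\psi)$ as stated. For $x\in E$ we have $g_E(x,\infty)=0$, and $G^\mu_E(x)=0$ because $E$ is regular. Therefore
$$
V^{\lambda^*}(x)-tV^\mu(x)=(1-t)\gamma_E+t\int g_E(y,\infty)\,d\mu(y)=c(\psi),\qquad x\in E.
$$
So $V^{\lambda^*}+\psi\equiv c(\psi)$ on $E=\supp{\lambda^*}$, and the second line of \eqref{equpro} is vacuous. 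Uniqueness in Lemma~\ref{lem1} gives $\lambda(\psi)=\lambda^*$. Finally, $\gamma_E=\log 2$ because $\mcap{[-1,1]}=1/2$.

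The only delicate point is the balayage identity for $V^{\beta_E(\mu)}$ on all of $\CC$, including the additive constant $\int g_E(y,\infty)\,d\mu(y)$. I take this from Corollary~\ref{cor4} as already established. If one wanted it directly, the argument is as follows. The function $V^\mu-G^\mu_E-V^{\beta_E(\mu)}$ is harmonic in $\myh{\CC}\setminus E$ and vanishes identically near $\infty$ up to the stated constant; this follows from $G^\mu_E(z)\to\int g_E(y,\infty)\,d\mu(y)$ as $z\to\infty$, where both potentials behave like $-\log|z|$. It is also constant on $E$ and continuous across $E$, and the maximum principle then finishes the argument.
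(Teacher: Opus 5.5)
Your route is the intended one. The paper gives no separate proof of this corollary; it is meant to be read off from the uniqueness part of Lemma~\ref{lem1} exactly as you do. You check that $\lambda^*=(1-t)\tau_E+t\beta_E(\mu)$ satisfies \eqref{equpro} with equality on $E=\supp\lambda^*$, using $V^{\tau_E}=\gamma_E-g_E(\cdot,\infty)$ and the balayage identity behind Corollary~\ref{cor4}. Your identification of $\lambda(\psi)$, of its support, and of the equilibrium constant $c(\psi)=(1-t)\gamma_E+t\int g_E(y,\infty)\,d\mu(y)$ is correct. Tacitly, $\mu$ must be a probability measure, since otherwise $\lambda^*\notin M_1(E)$.

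One claim in your proposal is false, however: your formula for $V^{\lambda^*}$ is \emph{not} ``precisely \eqref{psitv} with the constant $c(\psi)$ as stated''.
\begin{itemize}
\item Your expression ends with $t\int g_E(y,\infty)\,d\mu(y)$.
\item The printed first line ends with $c(\psi)$, which additionally contains $(1-t)\gamma_E$. In the printed second line the term $(1-t)\gamma_E$ therefore appears twice, once explicitly and once inside $c(\psi)$.
\end{itemize}
So the display exceeds the true potential by $(1-t)\gamma_E=(1-t)\log2\neq0$, and it cannot hold as printed, for two reasons. Restricted to $E$, it would give $V^{\lambda}-tV^\mu\equiv 2(1-t)\gamma_E+t\int g_E(y,\infty)\,d\mu(y)\neq c(\psi)$. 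As $z\to\infty$, its right-hand side plus $\log|z|$ tends to $(1-t)\gamma_E$ rather than $0$, which is impossible for the potential of a probability measure. The correct identity, which is what you actually proved, is
\begin{align*}
V^{\lambda(\psi)}(z)&\equiv(1-t)V^{\tau_E}(z)+t\bigl(V^\mu(z)-G^\mu_E(z)\bigr)+t\int g_E(y,\infty)\,d\mu(y)\\
&=-(1-t)g_E(z,\infty)+t\bigl(V^\mu(z)-G^\mu_E(z)\bigr)+c(\psi).
\end{align*}
You should have flagged the typo in the statement and stated this corrected form, rather than asserting agreement with the display. Your sketch of the balayage identity is fine in substance. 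Your phrase ``vanishes identically near $\infty$ up to the stated constant'' should instead say that the function is harmonic at $\infty$ with value $-\int g_E(y,\infty)\,d\mu(y)$ there.
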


\begin{remark}\label{rem21}
In general we have that $V^\lambda(x)+\psi(x)=c(\psi)$ quasi-everywhere (qua.e.) on $\supp{\lambda}$ in equation \eqref{equpro}, see \cite[Chapter 2, \S 4]{Lan66}, \cite[Chapter 5, Section 3]{NiSo88}, \cite[Chapter I, Section I.1]{SaTo97} and  the end of the proof of Lemma \ref{lem1} below.
Note that
\begin{equation}
c(\psi)=\int_E(V^\lambda+\psi)(x)\,d\lambda(x).
\label{cpsifo}
\end{equation}
Also from \cite[Theorem 4.1]{NiSo88} it follows that under the conditions of Lemma \ref{lem1} we have that $V^{\lambda}(x)+\psi(x)\equiv c(\psi)$ for $x\in\supp{\lambda}$.
\end{remark}

\begin{remark}\label{rem22}
Let $\Omega\Subset\CC$ be a domain which possesses a Green function $g_\Omega(z,\zeta)$,
and $\mu$ and $\nu$ be two positive measures on $\Omega$, $\supp{\mu},\supp{\nu}\subset \Omega$. Let 
\begin{equation}
I_\Omega(\mu):=\iint g_\Omega(z,\zeta)\,d\mu(z)\,d\mu(\zeta)\quad
\text{and}\quad 
I_\Omega(\nu):=\iint g_\Omega(z,\zeta)\,d\nu(z)\,d\nu(\zeta)
\label{greenin}
\end{equation}
be the Green's energies of the measure $\mu$ and $\nu$ respectevely, and
\begin{equation}
I_G(\mu,\nu):=\iint  g_\Omega(z,\zeta)\,d\mu(z)\,d\nu(\zeta)
\label{greenmu}
\end{equation}
be the mutual Green energy of the measures $\mu$ and $\nu$. 

The proofs in \cite[Chapter II, Section 5, Lemma 5.4]{SaTo97} show that if the measures $\mu$ and $\nu$ are of finite Green energy each, then (i) $I^2_\Omega(\mu,\nu)\leq I_\Omega(\mu)I_\Omega(\nu)<\infty$  and (ii) the measure $\mu+\nu$ is also of finite Green energy.

Since $g_\Omega(z,\zeta)=-\log|z-\zeta|+h_\Omega(z,\zeta)$, where the function $h_\Omega(z,\zeta)$ is  a continuous function for $z,\zeta\in K$, $K\Subset\Omega$, then from the above result for the Green energy it follows a similar result for the logarithmic energy \eqref{enelog}. More precisely, let the measures $\mu$ and $\nu$, $\supp{\mu},\supp{\nu}\Subset\CC$,  be of finite logarithmic energies. Then the measure $\mu+\nu$ is also of finite logarithmic energy and their mutual energy $[\mu,\nu]$ is finite, $[\mu,\nu]<\infty$.

Clearly, the above  conclusions are valid for the energy functional $J_\psi(\cdot)$ \eqref{enefil} of a measure in the presence of a continuous external field $\psi$.
\end{remark}

The measure $\lambda$ from \eqref{equpro} is called the {\it equilibrium measure} in the external field $\psi$ and the constant $c(\psi)=\const$ is called the {\it equilibrium constant} (see  \cite{GoRa87}, \cite{SaTo97}). Lemma \ref{lem1} follows directly from the \cite[\S 3, Lemma 6]{GoRa87}, \cite{NiSo88} but we give here its proof for completeness.

\begin{proof}[Proof of Lemma \ref{lem1}]
Let $\mu\in M_1(E)$ and
\begin{align}
J_\psi(\mu):&=\iint_{E\times E}\biggl(
\log\frac1{|x-y|}+\psi(x)+\psi(y)\biggr)\,d\mu(x)d\mu(y)\nonumber\\
&=\int_E V^\mu(x)\,d\mu(x)+2\int_E \psi(x)\,d\mu(x)
=J(\mu)+2\int_E \psi(x)\,d\mu(x)
\label{enefil}
\end{align}
be the energy functional of the measure $\mu$ in the external field $\psi(x)$ ($\psi$-weighted energy of $\mu$). Recall that $M^\circ_1(E)$ is the set all measures from $M_1(E)$ such that $J(\mu)<\infty$, and hence $J_\psi(\mu)<\infty$ for  all $\mu\in M_1^\circ(E)$.

Set
\begin{equation}
m:=\inf_{\mu\in M^\circ_1(E)}J_\psi(\mu)<\infty.
\label{minpro}
\end{equation}
Then there exists a sequence $\{\mu_n\}$, $\mu_n\in M^\circ_1(E)$, such that $J_\psi(\mu_n)=m_n\to m$ as $n\to\infty$. From the weak compactness of the space $M_1(E)$ it follows that $\mu_n\overset{*}\rightarrow\lambda\in M_1(E)$ as $n\to\infty$ for $n\in\Lambda$,
where $\Lambda\subset\NN$ is an infinite subsequence of $\NN$. 
Then according to the descendence principle (see \cite[Chapter I, \S 3, Theorem 1.3]{Lan66}, \cite{SaTo97}, \cite{Chi18} and \cite{Chi20}) we obtain that
$$
\varliminf_{\substack{n\to\infty\\n\in\Lambda}}
J_\psi(\mu_n)\geq J_\psi(\lambda).
$$
Therefore $J_\psi(\lambda)=m$, $\lambda\in M_1^\circ(E)$ and $\lambda=\lambda_{\min}$ is the so-called {\it minimizer} for \eqref{minpro}. Now let us prove that for the minimizer $\lambda$ the equilibrium property \eqref{equpro} is fulfilled, i.e. $\lambda_{\min}=\lambda_{\equ}=\lambda(\psi)$. 

Let $\mu,\nu\in M_1^\circ(E)$. It is easy to see that
\begin{align}
2J_\psi(\mu)+2J_\psi(\nu)-4J_\psi\biggl(\frac{\mu+\nu}2\biggr)
&=2J(\mu)+2J(\nu)-4J\biggl(\frac{\mu+\nu}2\biggr)\label{sumine}\\
&=J(\mu)+J(\nu)-[\mu,\nu]-[\nu,\mu]=J(\mu-\nu).\nonumber
\end{align}
Since $J(\mu-\nu)\geq0$ (see \cite{Chi19}) then from \eqref{sumine} we obtain that
\begin{equation}
2J_\psi\biggl(\frac{\mu+\nu}2\biggr)\leq J_\psi(\mu)
+J_\psi(\nu).
\label{sumneq}
\end{equation}
It is easy to see that for $t\in[0,1]$ and any $\nu$ we have
\begin{equation}
J_\psi(t\nu+(1-t)\lambda)-J_\psi(\lambda)
=2t\int_E(V^\lambda+\psi)(x)\,d(\nu-\lambda)(x)
+t^2J(\nu-\lambda).
\label{perine}
\end{equation}
Indeed,
\begin{align}
J_\psi(t\nu&+(1-t)\lambda)=J_\psi(t(\nu-\lambda)+\lambda)\nonumber\\
&=\int_EV^{t(\nu-\lambda)+\lambda}(x)\,d(t(\nu-\lambda)+\lambda)(x)
+2\int_E\psi(x)\,d(t(\nu-\lambda)+\lambda)(x)\nonumber
\end{align}
\begin{align}
&=\int_EV^{t(\nu-\lambda)}(x)\,d(t(\nu-\lambda))(x)
+\int_E V^{t(\nu-\lambda)}(x)\,d\lambda(x)\nonumber\\
&\quad
+\int_EV^\lambda(x)\,d(t(\nu-\lambda))(x)
+\int_E V^\lambda(x)\,d\lambda(x)\nonumber\\
&\quad+2\int_E\psi(x)\,d(t(\nu-\lambda))(x)+2\int_E\psi(x)\,d\lambda(x)
\nonumber\\
&=J_\psi(\lambda)+2\int_E(V^\lambda+\psi)(x)\,d(t(\nu-\lambda))(x)
+t^2J(\nu-\lambda)\nonumber\\
&=J_\psi(\lambda)+2t\int(V^\lambda+\psi)(x)\,d(\nu-\lambda)(x)
+t^2J(\nu-\lambda).
\label{perine2}
\end{align}
From \eqref{perine2} the equality \eqref{perine} follows.

Since $J(\nu-\mu)\geq0$, then from \eqref{perine} it follows that the minimizer $\lambda$ is the only measure which satisfies the inequality 
\begin{equation}
\int(V^\lambda+\psi)(x)\,d(\nu-\lambda)(x)\geq0
\label{ineqnu}
\end{equation}
for any $\nu\in M_1^\circ(E)$. 

Indeed, if \eqref{ineqnu} is satisfied by any $\nu$, then for $t=1$ we obtain from \eqref{perine} that $J_\psi(\nu)\geq J_\psi(\lambda)$ since $J(\nu-\mu)\geq0$. Thus $\lambda$ is a minimizer.

Conversely, if $\lambda=\lambda_{\min}$ is a minimizer, then $0\leq J_\psi((t\nu+(1-t)\lambda)-J_\psi(\lambda)$, $t\in(0,1)$. Since $J(\nu-\mu)\geq0$ then from the right-side of \eqref{perine}  by sending $t\to0$ we  obtain \eqref{ineqnu}.

The uniqueness of the minimizer follows from \eqref{sumine}, the inequality $J(\mu-\nu)\geq0$ and the relation $J(\mu-\nu)=0$ $\Leftrightarrow$ $\mu=\nu$ (see \cite{Chi18}, \cite{Chi19}).

Let us prove now that the minimizer $\lambda=\lambda_{\min}$ is an equilibrium measure \eqref{equpro} with
$$
c'(\psi):=\int_E (V^\lambda+\psi)(x)\,d\lambda(x).
$$
Suppose that $(V^\lambda+\psi)(x)<c'(\psi)$ on the set  $e\subset E$ of positive (inner) capacity, $\mcap{e}>0$. Then there exists \cite{Anc83} a regular compact set $e_1\subset e$ of positive capacity.  Let $\tau_{e_1}$ be the (probability) Robin measure for $e_1$. Then we have
$$
\int_E(V^\lambda+\psi)(x)\,d(\tau_{e_1}-\lambda)(x)
=\int_E(V^\lambda+\psi)(x)\,d\tau_{e_1}(x) -c'(\psi)<0.
$$
A contradiction with \eqref{ineqnu}. 
Therefore  $(V^\lambda+\psi)(x)\geq c'(\psi)$ qua.e. on $E$. 

The potential $V^\lambda$ is semi-continuous from below. Hence if $(V^\lambda+\psi)(x_0)>c'(\psi)$ on the $\supp{\lambda}$, $x_0\in\supp{\lambda}$,  then $(V^\lambda+\psi)(x)>c'(\psi)$ for $x\in U_\delta(x_0):=\{x\in E:|x-x_0|<\delta\}$, $\lambda(U_\delta)>0$, for some $\delta>0$. Therefore\footnote{Note that since $\lambda$ is of finite energy, then the integration with $\lambda$ does not depend on the set of zero capacity where $(V^\lambda+\psi)(x)<c'(\psi)$.}
$$
\int_E(V^\lambda+\psi)(x)\,d\lambda(x)>c'(\psi).
$$
A contradiction with the definition of $c'(\psi)$.

If $\lambda=\lambda_{\equ}$ is an equilibrium measure for \eqref{equpro} then $(V^\lambda+\psi)(x)\leq c'(\psi)$ on $\supp{\lambda}$. Therefore $\lambda\in M_1^\circ(E)$. We have also that for any $\nu\in M_1^\circ(E)$
\begin{align*}
\int_E(V^\lambda+\psi)(x)\,d(\nu-\lambda)(x)&=\int_E(V^\lambda+\psi)(x)\,d\nu(x)\\
&-\int_E(V^\lambda+\psi)(x)\,d\lambda(x)\geq c'(\psi)-c'(\psi)=0.
\end{align*}
Hence by \eqref{ineqnu} any equilibrium measure $\lambda_{\equ}$ coincides with the unique minimizer $\lambda_{\min}$ and  thus it is also unique. Clearly \eqref{cpsifo} is also valid.
\end{proof}

\subsection{}\label{s2s2}
The next result is the cornerstone in treating the asymptotic properties of Hermite--Padé polynomials (see \cite{GoRa81}, \cite{GoRa87}, \cite{RaSu13}, \cite{Rak16}, \cite{Sue18}, \cite{Sue25b}, \cite{RaSu25}).

\begin{lemma}\label{lem2}
Let $\{\Psi_n\}$ be a sequence of functions which are continuous on $E$ and positive on $E$, $\Psi_n(x)>0$, $x\in E$, $n=0,1,2,\dots$, and such that  as $n\to\infty$
$$
\frac1{2n}\log\Psi_n(x)\rightrightarrows \psi(x),\quad x\in E,
$$
where $\psi(x)$ is a continuous function on  $E$.
Suppose also that $\sigma$ is a positive Borel measure supported on $E$,
and such that $\sigma'(x)=d\sigma(x)/dx>0$ almost everywhere on $E$. 

Let $Q_n(x)=Q_n(x,\Psi_n)=x^n+\dotsb$ be polynomials that are orthogonal with respect to the variable weight function on $E$:
\begin{equation}
\int_{-1}^1 Q_n(x)x^k\Psi_n(x)\,d\sigma(x)=0,
\quad k=0,\dots,n-1.
\label{ortvar}
\end{equation}
Then as $n\to\infty$
\begin{equation}
\frac1n\chi(Q_n)\overset{*}\longrightarrow\lambda(\psi),
\label{convar}
\end{equation}
where $\lambda(\psi)=\lambda_{\equ}$ is the equilibrium measure from \eqref{equpro}.
\end{lemma}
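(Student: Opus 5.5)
The plan follows the classical Gonchar--Rakhmanov scheme for orthogonal polynomials with varying weights. Three ingredients are needed: the extremal property of monic orthogonal polynomials, the regularity of $\sigma$ (from $\sigma'>0$ a.e., via the Erdős--Turán/Stahl--Totik criterion), and the uniqueness part of Lemma~\ref{lem1}.

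\emph{Step 1: tightness and subsequences.} All zeros of $Q_n$ are real, simple and lie in $(-1,1)$, because $\Psi_n\,d\sigma$ is a positive measure on $E$ and \eqref{ortvar} is ordinary orthogonality. Hence $\{\frac1n\chi(Q_n)\}\subset M_1(E)$, and by weak-$*$ compactness it suffices to show that every limit point $\mu$, along $n\in\Lambda$, equals $\lambda(\psi)$. By Lemma~\ref{lem1} and its uniqueness, it is enough to verify the equilibrium relations \eqref{equpro} for $\mu$ with some constant.

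\emph{Step 2: the extremal property and two-sided bounds.} Set $m_n:=\int_E Q_n^2\Psi_n\,d\sigma=\min\int_E P^2\Psi_n\,d\sigma$, the minimum being over monic $P$ of degree $n$.

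For the upper bound, take as competitor a weighted Fekete-type polynomial $T_n$ for the field $\psi$, i.e.\ one with $\frac1n\chi(T_n)\to\lambda(\psi)$ and $\|T_n^2e^{2n\psi}\|_E^{1/n}\to e^{-2c(\psi)}$. Together with $\frac1{2n}\log\Psi_n\rightrightarrows\psi$ this gives $\varlimsup m_n^{1/(2n)}\le e^{-c(\psi)}$.

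For the lower bound, use the regularity of $\sigma$: it makes the $L^2(\sigma)$ and sup norms of weighted polynomials comparable up to $e^{o(n)}$, through the Nikolskii-type inequality for regular measures applied to $P^2\Psi_n$. Combined with the uniform convergence of the weights, this shows that $m_n^{1/(2n)}$ is asymptotically $\min_P\|Pe^{n\psi}\|_E^{1/n}$. Let $\lambda_n$ be the normalized zero counting measures of a weighted Chebyshev polynomial $T_n^*$ for the field $\psi$, and let $\mu^*$ be any weak-$*$ limit point of $\{\lambda_n\}$. The principle of descent gives
$$
\varliminf\min_P\|Pe^{n\psi}\|_E^{1/n}\ \ge\ \exp\Bigl(\min_E(V^{\mu^*}+\psi)\Bigr)\ \ge\ e^{-c(\psi)}.
$$
The last inequality comes from integrating against $\lambda(\psi)$ and using the symmetry of the mutual energy. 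Hence $\lim_{n\in\Lambda} m_n^{1/(2n)}=e^{-c(\psi)}$.

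\emph{Step 3: the equilibrium relations for $\mu$.} On $\supp\mu$ I would apply the standard Gonchar--Rakhmanov argument. Suppose $V^\mu+\psi<c(\psi)$ on an interval $I\subset\supp\mu$, hence $V^\mu+\psi>c(\psi)$ somewhere else on $E$. Moving a few zeros of $Q_n$ from $I$ into $E\setminus\supp\mu$ then produces a monic polynomial whose weighted $L^2$ norm is exponentially smaller than $m_n$, contradicting minimality. The mechanism is the weighted-$L^2$ version of the fact that the polynomial $Q_n(x)^2/(x-x_j)^2$, weighted by $\Psi_n\,d\sigma$, concentrates where $|Q_n|e^{n\psi}$ is maximal. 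Outside $\supp\mu$: if $V^\mu+\psi<c(\psi)$ at some point of $E\setminus\supp\mu$, then by lower semicontinuity $\frac1n\log|Q_n|+\psi>-c(\psi)+\varepsilon$ on a neighbourhood of that point, since the zeros stay away from it. Because $\sigma'>0$ a.e., that neighbourhood has positive $\sigma$-mass, which would force $m_n^{1/(2n)}>e^{-c(\psi)+\varepsilon/2}$, contradicting Step~2. Thus $\mu$ satisfies \eqref{equpro}, so $\mu=\lambda(\psi)$, and \eqref{convar} follows.

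The main obstacle is the inequality on $\supp\mu$ in Step~3, which requires the zero-moving (or equivalently Gauss-quadrature) argument uniformly in the variable weights. To keep it uniform I would first reduce to the case $\Psi_n=e^{2n\psi}$ by absorbing the factor $e^{o(n)}$, and then use the Gauss--Jacobi quadrature identity for $Q_n^2/(x-x_j)^2$.
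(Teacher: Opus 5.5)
There is a genuine gap in Step~3, at the point that carries the content of the lemma. To identify a limit point $\mu$ through \eqref{equpro} you must show $V^\mu+\psi\le c$ on $\supp\mu$. That is, you must rule out points of $\supp\mu$ where $V^\mu+\psi$ lies strictly above its minimum.

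Your argument ``on $\supp\mu$'' instead assumes $V^\mu+\psi<c$ on an interval $I\subset\supp\mu$, which attacks the other inequality. It then proposes moving zeros out of $I$, and that runs backwards. Where $V^\mu+\psi$ is small, $|Q_n|^2\Psi_n\approx e^{-2n(V^\mu+\psi)}$ is exponentially large and carries the $L^2$ mass; this is the very concentration you mention. Removing zeros from $I$ therefore enlarges the weighted polynomial exactly where its mass sits, and no contradiction with minimality results.

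That case in fact needs no zero moving. It is excluded like your off-support case: combine $\varlimsup m_n^{1/(2n)}\le e^{-c}$ with a lower bound for $\int|Q_n|^2\Psi_n\,d\sigma$ over a set of positive Lebesgue measure where $V^\mu+\psi<c-\eps$. Zeros are dense on $\supp\mu$, and $o(n)$ of them may accumulate even at points of $E\setminus\supp\mu$, so ``the zeros stay away'' is not available. This step therefore requires convergence of $-\frac1n\log|Q_n|$ to $V^\mu$ in Lebesgue measure together with $\sigma'>0$ a.e., as in \eqref{lestint1}.

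The missing inequality is precisely what the paper proves, and it needs none of your Step~2. Set $m_0:=\min_E(V^\mu+\psi)$. Then $V^\mu+\psi\ge m_0$ on $E$ is automatic, so if $\mu\ne\lambda(\psi)$ there is $x_1\in\supp\mu$ with $V^\mu+\psi>m_0+\eps_0$ on $U_\delta(x_1)$.

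\begin{itemize}
\item Take two zeros $\zeta_{n,1},\zeta_{n,2}$ of $Q_n$ in $U_\delta(x_1)$ and set $p_n=(x-\zeta_{n,1})(x-\zeta_{n,2})$. Orthogonality of $Q_n$ to $Q_n/p_n$ gives \eqref{orttil}.
\item The integrand $Q_n^2\Psi_n/p_n$ has constant sign on $E\setminus U_\delta(x_1)$, and its integral there is $e^{-2nm_0+o(n)}$.
\item Over $U_\delta(x_1)$ the integral is at most $e^{-2n(m_0+\eps_0)+o(n)}$, so the two parts cannot cancel: a contradiction.
\end{itemize}
The exponential discrepancy comes from the exact cancellation forced by orthogonality, not from comparing norms after relocating zeros.

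In this real-line setting you could alternatively bypass the upper inequality. Suppose you prove $V^\mu+\psi\ge c$ q.e.\ on all of $E$. The domination principle then gives $V^\mu\ge V^{\lambda}$ on $\CC$. The function $V^\mu-V^{\lambda}\ge0$ is superharmonic on $\myh{\CC}\setminus\supp\lambda$ and vanishes at $\infty$, so the minimum principle forces $\mu=\lambda$. Your proposal does not contain this argument either.

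Finally, fix the signs. For a conclusion phrased with $V^\lambda+\psi$ one needs $\Psi_n=e^{-2n\psi+o(n)}$, which is how the paper's proof actually uses the weights (cf.\ \eqref{limlogq}). Your formulas $\|T_n^2e^{2n\psi}\|_E^{1/n}\to e^{-2c(\psi)}$ and $\varliminf\min_P\|Pe^{n\psi}\|_E^{1/n}\ge\exp(\min_E(V^{\mu^*}+\psi))$ both have sign errors. Also, the principle of descent yields upper, not lower, bounds for $|P|^{1/n}$.
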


\begin{remark}\label{rem23}
Note that the orthogonality relations \eqref{ortvar} are equivalent to the following ones:
\begin{equation}
\int_{-1}^1 Q_n(x)P_{n}(x)\Psi_n(x)\,d\sigma(x)=0
\label{ortvar2}
\end{equation}
with any $P_n\in\PP_{n-1}$, i.e., any polynomial $P_n$ of $\mdeg{P_n}\leq{n-1}$. Therefore, the external field $\Psi_n$ in the relation \eqref{ortvar2} can depend on the polynomial $Q_n$ itself, see \cite[formula (119)]{RaSu13}, \cite[formula (41)]{Rak16}, \cite[Section 2.5]{Sue24} and cf. \eqref{ortuQ} below. 
Indeed, in \cite[formula (41)]{Rak16} we can set $P_n(x)=Q_n(x)/(x-x_n)$, $Q_n(x_n)=0$, to obtain the external field that depends on $Q_{n}$ itself. 

Moreover, since $\mdeg{Q_n}=n$, all zeros of $Q_n$ are simple and belong to the open interval $(-1,1)$, then  the orthogonality relations \eqref{ortvar2} are equivalent to the orthogonal relations with polynomials of type $Q_n(x)/((x-\zeta_{n,1})(x-\zeta_{n,2}))$ of degree $n-2$, where the points $\zeta_{n,1}\neq\zeta_{n,2}$ vary throughout the set of all zeros of $Q_n$. And again, the external field $\Psi_n$ can depend on the polynomial $Q_n$ through the polynomial $Q_n(x)/((x-\zeta_{n,1})(x-\zeta_{n,2}))$.

Also, instead of the case $n=0,1,2,\dots$ it is possible to consider the case when $n\in\Lambda$, where $\Lambda\subset\NN$ is an infinity subsequence of $\NN$. 
\end{remark}

\begin{remark}\label{rem24}
Note that from the proof of Lemma \ref{lem2} it directly follows that uniformly for $y\in K\Subset\RR\setminus{E}$
\begin{align}
\lim_{n\to\infty}
\biggl|
\int_{-1}^1\frac{Q_n^2(x)\Psi_n(x)}{x-y}\,d\sigma(x)
\biggr|^{1/n}&=e^{-2c(\psi)},
\label{firstcoefy}\\
\lim_{n\to\infty}\biggl(
\int_{-1}^1 Q_n^2(x)\Psi_n(x)\,d\sigma(x)
\biggr)^{1/n}&=e^{-2c(\psi)},
\label{firstcoef}
\end{align}
\end{remark}

\begin{remark}\label{rem25}
Since  for each $Q\in\PP_n$, $Q\not\equiv0$,
$(Q(z)-Q(x))/(z-x)$ is a polynomial of degree $\leq{n-1}$, then
$$
\int Q_n(x)\frac{Q(z)-Q(x)}{z-x}\Psi_n(x)
\,d\sigma(x)=0.
$$
Thus
\begin{align}
\int \frac{Q_n(x)}{z-x}\Psi_n(x)\,d\sigma(x)
&=\frac1{Q(z)}\int \frac{Q_n(x)Q(x)}{z-x}\Psi_n(x)\,d\sigma(x)\nonumber\\
&=\frac1{Q_n(z)}\int \frac{Q^2_n(x)}{z-x}\Psi_n(x)\,d\sigma(x).\label{relQint}
\end{align}
\end{remark}

\begin{proof}[Proof of Lemma \ref{lem2}]
The proof of Lemma \ref{lem2} is based on the Gonchar--Rakhmanov--Stahl method, created in 1985--1987
(see \cite{Sta97b}, \cite{GoRa87}, \cite{RaSu13}, \cite{Rak16} and the bibliography therein).

As usual when using\footnote{Note that under the conditions of Lemma \ref{lem2} the application of the $\GRS$-method is much more easier than in general situation since we are in a real case situation, see \cite{Sta97b}, \cite{GoRa87}, \cite{Rak16}.} Gonchar--Rakhmanov--Stahl method (in short $\GRS$-method) we shall apply the method from the opposing side. Thus, suppose  that as  $n\to\infty$
\begin{equation}
\frac1n\chi(Q_{n})\not\rightarrow\lambda=\lambda(\psi).
\label{qnotla}
\end{equation}
Then from \eqref{qnotla} and based on orthogonality relations \eqref{ortvar} we will come to a contradiction.

Recall that we are in a real case situation and thus from the orthogonality conditions \eqref{ortvar} it follows that $\mdeg{Q_n}=n$ and all its zeros $x_{n,1},\dots,x_{n,n}$ are real, simple  and belong to the open interval $(-1,1)$. Therefore, from the weak compactness of the space of probability measures $M_1(E)$ it follows that for an infinite subsequence $\Lambda\subset\NN$ we have
\begin{equation}
\frac1n\chi(Q_{n}){\overset{*}\longrightarrow}\mu\neq\lambda,
\quad n\in\Lambda,\quad n\to\infty,
\label{qlimmu}
\end{equation}
$\supp{\mu}\subset{E}$. 
We show now that the above relation \eqref{qlimmu} and the orthogonality conditions \eqref{ortvar} are in contradiction with each other.

Indeed, since $\mu\neq\lambda$, then
for $x\in \supp{\mu}\subset E$ we have that
$$
V^\mu(x)+\psi(x)\not\equiv m_0
:=\min_{z\in E}\bigl(V^\mu(x)+\psi(x)\bigr)=V^\mu(x_0)+\psi(x_0),
$$
where $x_0\in E$. Thus, there exists a point
$x_1\in \supp{\mu}\subset E$, $x_1\neq x_0$, and $\eps_0>0$ such that 
\begin{equation}
V^\mu(x_1)+\psi(x_1)=m_1>m_0+\eps_0.
\label{neqx1}
\end{equation}
Since $\psi(x)$ is a continuous function on $E$ and the potential
$V^\mu(x)$ is a semi-continuous function from below then quite the same inequality \eqref{neqx1} is valid in a $\delta$-neighbourhood 
$U_\delta(x_1):=(x_1-\delta,x_1+\delta)\not\ni x_0$, $\delta>0$, of the point $x_1$. Since $x_1\in \supp{\mu}$, then $\mu(U_\delta(x_1))\geq\delta_0>0$. Therefore, for $n$ large enough, $n\geq n_0$, $n\in\Lambda$, there exists a polynomial $p_n(x)=(x-\zeta_{n,1})(x-\zeta_{n,2})$ such that $\zeta_{n,1},\zeta_{n,2}\in U_\delta(x_1)$ and $p_n(x)$  divides the polynomial $Q_{n}$, i.e., $Q_{n}(x)/p_n(x)\in\PP_{n-2}$.

Set $x_{n-1,n}=\zeta_{n,1}$, $x_{n,n}=\zeta_{n,2}$ and
\begin{equation}
\myt{Q}_n(x):=\frac{Q_{n}(x)}{p_n(x)}=\prod_{j=1}^{n-2}(z-x_{n,j}).
\label{qtilde}
\end{equation}
Now from the orthogonality relations \eqref{ortvar} we obtain that
\begin{equation}
0=\int_{E\setminus{U_\delta(x_1)}}
\frac{Q_{n}^2(x)}{p_n(x)}\Psi_n(x)\,d\sigma(x)
+\int_{\myo{U}_\delta(x_1)}
\frac{Q_{n}^2(x)}{p_n(x)}\Psi_n(x)\,d\sigma(x).
\label{orttil}
\end{equation}
Let us denote the first integral in \eqref{orttil} with $I_{n,1}$ and the second one with $I_{n,2}$. Since $\mdeg{p_n}=2$, then for $x\in E\setminus{U_\delta(x_1)}$ the sign of  the integrand in $I_{n,1}$  is constant. Therefore, we have that 
\begin{equation}
|I_{n,1}|=\int_{E\setminus{U_\delta(x_1)}}\biggl|
\frac{Q_{n}^2(x)}{p_n(x)}\biggr|\Psi_n(x)\,d\sigma(x).
\label{ortint1}
\end{equation}
From the known methods of the theory of logarithmic potential, and similarly to \cite[Lemma 7]{GoRa87}, there exists a limit 
\begin{equation}
\lim_{\substack{n\to\infty\\n\in\Lambda}} |I_{n,1}|^{1/2n}
=\exp\biggl\{
-\min_{x\in E\setminus{U_\delta(x_1)}}\bigl(V^\mu(x)+\psi(x)
\bigr)\biggr\}
=e^{-m_0}.
\label{limint1}
\end{equation}
Moreover, the proof of the relation \eqref{limint1} is quite similar  to the proof of \cite[Lemma 7]{GoRa87}. We have that as $n\to\infty$
\begin{equation}
\min_{x\in E}\biggl\{-\frac1{2n}
\log\bigl(
|Q_{n}(x)\myt{Q}_n(x)|\Psi_n(x)\bigr)\biggr\}
\to\min_{x\in E} \bigl\{V^\mu(x)+\psi(x)\bigr\}.
\label{limlogq}
\end{equation}
From \eqref{limlogq} follows the relation
\begin{equation}
\max_{x\in E}\bigl\{|Q_{n}(x)\myt{Q}_n(x)|\Psi_n(x)
\bigr\}^{1/2n}\to\exp\bigl\{-\min_{x\in E}\bigl[V^\mu(x)+\psi(x)\bigr]\bigr\}=e^{-m_0}
\label{limqpsi}
\end{equation}
as $n\to\infty$, $n\in\Lambda$, and hence the upper estimate is
\begin{equation}
\varlimsup_{\substack{n\to\infty\\n\in\Lambda}} |I_{n,1}|^{1/n}
\leq e^{-m_0}.
\label{upesti1}
\end{equation}

Now let us prove the corresponding estimate from below.
The potential $V^\mu$ is continuous in the thin topology \cite[Chapter 5, \S 3]{Lan66}.
Hence, the function $V^\mu+\psi$ is approximatively continuous with respect
to Lebesgue measure on the compact $E$. From there it follows that  for any $\eps>0$ the set
$$
e=\{x\in E: (V^\mu+\psi)(x)<m_0+\eps\}
$$
is of positive Lebesgue measure, $|e|>0$. From the above we obtain the convergence in Lebesgue measure on $E$ as $n\to\infty$
$$
-\frac1{2n}\log
\bigl\{|Q_{n}(x)\myt{Q}_n(x)|\Psi_n(x)\bigr\}
\overset{\meas}\longrightarrow
(V^\mu+\psi)(x).
$$
Thus, the Lebesgue measure of the set
$$
e_n:=\biggl\{x\in e:-\frac1{2n}\log
\bigl(|Q_{n}(x)\myt{Q}_n(x)|\Psi_n(x)\bigr)
<m_0+\eps\biggr\}
$$
converges to the measure of the set $e$ as $n\to\infty$, $n\in\Lambda$. Hence,
\begin{align}
\varliminf_{\substack{n\to\infty\\n\in\Lambda}} |I_{n,1}|^{1/2n}
&\geq e^{-(m_0+\eps)}\lim_{\substack{n\to\infty\\n\in\Lambda}}
\(\int_{e_n}\,d\sigma(x)\)^{1/2n}\nonumber\\
&= e^{-(m_0+\eps)}\lim_{\substack{n\to\infty\\n\in\Lambda}}
\(\sigma(e_n)\)^{1/2n}=e^{-(m_0+\eps)}.
\label{lestint1}
\end{align}
The last inequality in \eqref{lestint1} is valid since $|e|>0$ and $\sigma'(x)>0$ a.e. on $E$.
Since $\eps>0$ is an arbitrary number, then from \eqref{lestint1} follows that the lower estimate is
$$
\varliminf_{\substack{n\to\infty\\n\in\Lambda}} |I_{n,1}|^{1/n}
\geq e^{-m_0}.
$$
The relation \eqref{limint1} has been proved. 

Let us return to \eqref{orttil}, for the second integral $I_{n,2}$ the following estimate from above is valid
\begin{equation}
|I_{n,2}|\leq
\int_{\myo{U}_\delta(x_1)}
|Q_n(x)\myt{Q}_n(x)|\Psi_n(x)\,d\sigma(x).
\label{uestint2}
\end{equation} 
Based on relations \eqref{uestint2} and \eqref{neqx1}, and using arguments quite similar to the previous, we obtain that
\begin{equation}
\varlimsup_{\substack{n\to\infty\\n\in\Lambda}} |I_{n,2}|^{1/n}
\leq\exp\biggl\{-\min_{x\in \myo{U}_\delta(x_1)}
\bigl(V^\mu(x)+\psi(x)\bigr)
\biggr\}<e^{-(m_0+\eps_0)}<e^{-m_0}.
\label{ulimint2}
\end{equation}
The relations \eqref{limint1} and \eqref{ulimint2} are in a contradiction with each other since in view of orthogonality relation \eqref{orttil} we have that $I_{n,1}=-I_{n,2}$. 

The proof of Lemma \ref{lem2} is complete.
\end{proof}

Recall that if $\mu,\nu\in M^\circ_1(E)$ then the value $[\mu,\nu]$ is finite (see \cite{SaTo97}, \cite{Chi18}, \cite{Chi19}, \cite{Chi20} and Remark \ref{rem22}):
$$
[\mu,\nu]:=\iint_{E\times E}\log\frac1{|x-y|}\,d\mu(x)d\nu(y)
=[\nu,\mu]<\infty,
$$
and we also have that $J(\mu-\nu)\geq0$ with equal sign only for the case when $\mu=\nu$ (the positiveness of the logarithmic kernel, \cite{Lan66}, \cite{SaTo97}, \cite{Chi18}, \cite{Chi19}, \cite{Chi20}).

\subsection{}\label{s2s3}
Note that there are some useful connections between the solutions of the potential equilibrium
problems \eqref{equiv} and \eqref{equivconj} (see \cite[formula (24)]{BuSu15}, \cite{IkSu24}, \cite{Sue25})
\begin{align}
\theta V^{\lambda_F(\theta)}(z)+G^{\lambda_F(\theta)}_E(z)
+\theta g_E(z,\infty)+(1+\theta)G^{\lambda_E(\theta)}_F(z)&\equiv c_F(\theta),\quad z\in\myh{\CC},\label{connect1}\\
\theta V^{\lambda_E(\theta)}(z)+G^{\lambda_E(\theta)}_F(z)+\theta g_E(z,\infty)+G^{\lambda_F(\theta)}_E(z)&\equiv c_E(\theta),\quad z\in\myh{\CC},\label{connect2}
\end{align}
where $\lambda_F(\theta)=\beta_F(\lambda_E(\theta))$.
The relation \eqref{connect1} has been proved in \cite[formula (24)]{BuSu15} (see also \cite{IkSu24} and \cite{Sue25}).  The relation \eqref{connect2} follows from the fact that the function
$$
v(z):=\theta V^{\lambda_E(\theta)}(z)+G^{\lambda_E(\theta)}_F(z)+\theta g_E(z,\infty)+G^{\lambda_F(\theta)}_E(z)
$$
is a harmonic function in $D = \myh{\CC}\setminus E$ which is continuous in $\myh{\CC}$
and $v(z)\equiv\const$ on $E$ (cf. \cite{BuSu15}, \cite{Sue25}).
From identities \eqref{connect1} and \eqref{connect2} it follows that
\begin{equation}
\begin{aligned}
c_E(\theta)&=G^{\lambda_E(\theta)}_F(\infty)+\theta\gamma_E+G^{\lambda_F(\theta)}_E(\infty),\\
c_F(\theta)&=G^{\lambda_F(\theta)}_E(\infty)+\theta\gamma_E+(1+\theta)G^{\lambda_E(\theta)}_F(\infty)\\
&=c_E(\theta)+\theta G^{\lambda_E(\theta)}_F(\infty),\quad \gamma_E=-\log\mcap{E}=\log2.
\end{aligned}
\label{consts}
\end{equation}

In connection with the relation \eqref{balay} and the identities \eqref{connect1} and \eqref{connect2}, the potential-theoretic equilibrium problem \eqref{equivconj} can be considered as conjugated to the equilibrium problem \eqref{equiv}.

\section{Proof of Theorem \ref{the1}}\label{s3}

\subsection{}\label{s3s1}
Only for this Section \ref{s3}, and to simplify the notations,
we set $n=m$ and rewrite \eqref{hepa2} in the following form
\begin{align}
R_{n,1}(z):=
\bigl(Q_{2n}f-P_{2n,1}\bigr)(z)
&=O(z^{-n-1}),\quad z\to\infty,
\label{hepa2n1}\\
R_{n,2}(z):=
\bigl(Q_{2n}f^2-P_{2n,2}\bigr)(z)
&=O(z^{-n-1}),\quad z\to\infty,
\label{hepa2n2}
\end{align}
where $\mdeg{Q_{2n}},\mdeg{P_{2n,1}},\mdeg{P_{2n,2}}\leq{2n}$.

From \eqref{hepa2n1} it follows that for an arbitrary polynomial $p$, $\mdeg{p}\leq{n-1}$,
\begin{equation}
\oint_{\gamma_1}R_{n,1}(z)p(z)\,dz=0,
\label{gam1}
\end{equation}
where $\gamma_1$ is an arbitrary closed  curve which separates $E$ from $F$ and the infinity point $z=\infty$.
Thus from \eqref{gam1} and representation \eqref{repf} it follows that
\begin{equation}
\int_{-1}^1 Q_{2n}(x)p(x)\,d\sigma(x)=0.
\label{ort2f}
\end{equation}
In a similar way, based on \eqref{hepa2n2} and \eqref{repf2}, we obtain that for an arbitrary polynomial $q$, $\mdeg{q}\leq{n-1}$,
\begin{equation}
\int_{-1}^1 Q_{2n} (x)q(x)\biggl[\frac1{\sqrt{AB}}+\myh{\sigma}_2(x)\biggr]\,d\sigma(x)=0.
\label{ort2f2}
\end{equation}
From \eqref{ort2f} and \eqref{ort2f2} it follows that for arbitrary polynomials $p,q\in\PP_{n-1}$
\begin{equation}
\int_E Q_{2n}(x)[p(x)+q(x)\myh{\sigma}_2(x)]\,
d\sigma(x)=0.
\label{ort2m}
\end{equation}
Based on \eqref{ort2m} it is easy to obtain that $\mdeg{Q_{2n}}=2n$, all zero of $Q_{2n}$ are simple and belong to the open interval $(-1,1)$. Indeed, since polynomials $p,q\in\PP_{n-1}$ in \eqref{ort2m} are arbitrary, then for each polynomial $\omega_{2n-1}(z)=z^{2n-1}+\dotsb$,   $\mdeg{\omega_{2n-1}}=2n-1$, $Z(\omega_{2n-1})\subset E$, there exist two polynomials $p_{n-1},q_{n-1}\in\PP_{n-1}$ such that the function $p_{n-1}(z)+q_{n-1}(z)\myh{\sigma}_2(z)$ vanishes at all zeros of $\omega_{2n-1}$, i.e. the function 
\begin{equation}
\ell_n(z):=\frac{p_{n-1}(z)+q_{n-1}(z)\myh{\sigma}_2(z)}{\omega_{2n-1}(z)}
\label{defnelln}
\end{equation}
is a holomorphic function on $E$. Therefore, the function $\ell_n\in\HH(G)$ has a zero of multiplicity at least $n$ at the infinity point.  Thus we have that
\begin{equation}
\oint_{\gamma_2}\ell_n(z)z^k\,dz=0,
\quad k=0,\dots,n-2,
\label{ortelga2}
\end{equation}
where $\gamma_2$ is an arbitrary closed curve that separates the compact set $F$ from the infinity point.  From \eqref{ortelga2} we obtain the following orthogonality relations
\begin{equation}
\int_F \frac{q_{n-1}(y)y^k\,d\sigma_2(y)}{\omega_{2n-1}(y)}=0,
\quad k=0,\dots,n-2.
\label{ortqom}
\end{equation}
We have also that $\ell_n(z)s_n(z)\in\HH(G)$ for every polynomial $s_n\in\PP_{n-1}$ and thus
\begin{equation}
\ell_n(z)s_n(z)=\frac1{2\pi i}
\oint_{\gamma_2 }\frac{\ell_n(t)s_n(t)\,dt}{t-z}
=\int_F\frac{q_{n-1}(y)s_n(y)\,d\sigma_2(y)}{(z-y)\omega_{2n-1}(y)},
\quad z\in G.
\label{ells}
\end{equation}
Set $s_n=q_{n-1}$. Then by using \eqref{defnelln} we obtain a new form of \eqref{ells}
\begin{equation}
(p_{n-1}+q_{n-1}\myh{\sigma}_2)(z)
=\frac{\omega_{2n-1}(z)}{q_{n-1}(z)}
\int_F\frac{q_{n-1}^2(y)\,d\sigma_2(y)}{(z-y)\omega_{2n-1}(y)},
\quad z\in\Omega,
\label{pqsig}
\end{equation}
where the polynomial $q_{n-1}$ satisfies the orthogonality conditions \eqref{ortqom} which depend on the polynomial $\omega_{2n-1}$.
By the orthogonality relations \eqref{ort2m} with $p=p_{n-1}$ and $q=q_{n-1}$, the relation \eqref{pqsig} can be written in the following form
\begin{equation}
\int_E Q_{2n}(x)\frac{\omega_{2n-1}(x)}{q_{n-1}(x)}
\biggl\{
\int_F \frac{q_{n-1}^2(y)\,d\sigma_2(y)}{(x-y)\omega_{2n-1}(y)}
\biggr\}\,d\sigma(y)=0.
\label{ort2mn}
\end{equation}
Remark that \eqref{ort2mn} is valid for the given polynomial $Q_{2n}$ and for every polynomial $\omega_{2n-1}$, $\mdeg{\omega_{2n-1}}=2n-1$,  $Z(\omega_{2n-1})\subset E$.  Since $Z(\omega_{2n-1})\subset E$, then from \eqref{ortqom} it directly follows that all  $n-1$ zeros of $q_{n-1}$ are simple and belong to the compact set $F$, $Z(q_{n-1})\subset F$.  
From \eqref{ort2mn} by Remark \ref{rem23} it follows that $\mdeg{Q_{2n}}=2n$  and $Z(Q_{2n})\subset E$.
Further on, we suppose that these three polynomials $Q_{2n}$, $\omega_{2n-1}$ and $q_{n-1}$ are normalized  to be monic. 
Then we have that $Z(Q_{2n}),Z(\omega_{2n-1})\subset{E}$, $Z(q_{n-1})\subset{F}$ and $\mdeg{Q_{2n}}=2n$,  $\mdeg{\omega_{2n-1}}=2n-1$, $\mdeg{q_{n-1}}=n-1$. Thus, for a subsequence $\Lambda\subset\NN$ we have that as $n\to\infty$, $n\in\Lambda$,
\begin{equation}
\frac1{2n}\chi(Q_{2n}){\overset{*}\longrightarrow}\mu_Q,\quad
\frac1{2n-1}\chi(\omega_{2n-1}){\overset{*}\longrightarrow}\mu_\omega, \quad
\frac1n\chi(q_{n-1})
 \overset{*}\longrightarrow \mu_q,
\label{limpols}
\end{equation}
where $\mu_Q,\mu_\omega\in M_1(E)$ and $\mu_q\in M_1(F)$.
Since polynomials $\omega_{2n-1}(z)=z^{2n-1}+\dotsb$, $Z(\omega_{2n-1})\subset E$, are arbitrary,
then we can define them in such way that $\mu_\omega=\mu_Q$.

From \eqref{ort2mn} it follows by Corollary~\ref{cor4}, Corrolary~\ref{cor5},
Lemma~\ref{lem2} and Remark~\ref{rem24} that uniformly for $x\in E$
\begin{equation}
\begin{aligned}
\lim_{n\to\infty}\biggl|\int_F \frac{q_{n-1}^2(y)\,d\sigma_2(y)}{(x-y)\omega_{2n-1}(y)}\biggr|^{1/n}=e^{-c_1},\\
V^{\mu_q}(z)\equiv V^{\mu_Q}(z)-G_F^{\mu_Q}(z)+c_1, \quad z\in \myh\CC,\\
4V^{\mu_Q}(x)-V^{\mu_q}(x)\equiv c_2,\quad x\in E,
\end{aligned}
\label{relat2m}
\end{equation} 
$\supp{\mu_q}=F$, $\supp{\mu_ Q}=E$.
Thus from relations \eqref{relat2m} we obtain that the measure $\mu_Q\in M_1(E)$ satisfies the following identity
\begin{equation}
3V^{\mu_Q}(x)+G_F^{\mu_Q}(x)\equiv\const, 
\quad \supp{\mu_Q}=E,
\quad x\in E.
\label{iden1}
\end{equation}
From \cite[Section 7]{Sue25} and Section \ref{s2s3} follows that
the solution of the identity \eqref{iden1} is $\mu_Q=\lambda_E=\lambda_E(3)$,
it is unique, the constant is $\const=c_E(3)$, and $\supp{\lambda_E}=E$. 

Finally we obtain that as $n\to\infty$, $n\in\Lambda$,
\begin{equation}
\frac1{2n}\chi(Q_{2n}) \overset{*}\longrightarrow \lambda_E(3).
\label{limq2n}
\end{equation}

\subsection{}\label{s3s2}
From \eqref{hepa2n1} it follows that  $R_{n,1}(z)s_n(z)\in\HH(D)$, $R_{n,1}(\infty)s_n(\infty)=0$ for every polynomial $s_n\in\PP_{n-1}$ and thus
\begin{align}
R_{n,1}(z)s_n(z)&=\oint_{\gamma_1}
\frac1{2\pi i}\frac{Q_{2n}(t)f(t)s_n(t)\,dt}{t-z}
=\frac1{2\pi i}\int_E\frac{Q_{2n}(x)\Delta f(x)s_n(x)\,dx}{x-z}\nonumber\\
&=\int_E\frac{Q_{2n}(x)s_n(x)\,d\sigma(x)}{z-x},
\quad z\in D.
\label{rnsn}
\end{align}

Set $g(z):=(1/\sqrt{AB}+\myh{\sigma}_2)(z)\in\HH(\infty)$. Then from \eqref{hepa2n1} it follows that for every polynomial $p\in\PP_{n-1}$
\begin{equation}
\oint_\Gamma R_{n,1}(t)g(t)p(t)\,dt=0, 
\label{relGam}
\end{equation}
where $\Gamma$ is an arbitrary contour which separates the set $E\cup F$ from the infinity point. From \eqref{relGam} we obtain that
\begin{equation}
\oint_{\gamma_1}R_{n,1}(t)g(t)p(t)\,dt+
\oint_{\gamma_2}R_{n,1}(t)g(t)p(t)\,dt=0,
\label{relgam12}
\end{equation}
where $\gamma_1$ separates $E$ from $F$ and the infinity point and $\gamma_2$ separates $F$ from $E$ and the infinity point. From \eqref{relgam12} it directly follows that
\begin{equation}
\int_E Q_{2n}(x)\biggl[\frac1{\sqrt{AB}}+\myh{\sigma}_2(x)\biggr]p(x)\,d\sigma(x)+
\int_F R_{n,1}(y)p(y)\,d\sigma_2(y)=0
\label{relef}
\end{equation}
for every polynomial $p\in\PP_{n-1}$. By the orthogonality relation \eqref{ort2m} we have that the first integral in \eqref{relef} equals zero. Thus \eqref{relef} implies the following relation
\begin{equation}
\int_F R_{n,1}(y)p(y)\,d\sigma_2(y)=0
\label{relortF}
\end{equation}
for every polynomial $p\in\PP_{n-1}$. Since $\sigma'_2>0$ a.e. on $F$ then from \eqref{relortF} it follows that the function $R_{n,1}$ has at least $n$  simple zeros  on $F$. Let $\Omega_n(z)=z^n+\dotsb$  be the corresponding polynomial, i.e., the  the function $R_{n,1}/\Omega_n$ is a holomorphic function on $F$ and therefore, in $D$. Also we have that $R_{n,1}(\infty)q(\infty)/\Omega_n(\infty)=0$ for every polynomial $q\in\PP_{2n-1}$.  From the relation
\begin{equation}
\oint_{\gamma_1}\frac{R_{n,1}(t)q(t)}{\Omega_n(t)}\,dt=0
\label{relROm}
\end{equation}
follows that 
\begin{equation}
\int_E Q_{2n}(x)q(x)\frac{d\sigma(x)}{\Omega_n(x)}=0.
\label{relQOm}
\end{equation}
Since $Z(\Omega_n)\subset F$ and  $\mdeg{\Omega_n}=n$, then we can assume that for a subsequence $\Lambda\subset\NN$
$$
\frac1n\chi(\Omega_n) \overset{*}\longrightarrow {\mu_\Omega}\in M_1(F),
\quad n\to\infty, \quad n\in\Lambda.
$$
Thus by Lemma \ref{lem2} from \eqref{relQOm} it follows that
\begin{equation}
4V^{\lambda_E}(x)-V^{\mu_\Omega}(x)\equiv c_3=\const, 
\quad x\in E,
\label{equQOm}
\end{equation}
and by Corollary~\ref{cor5} we obtain that $c_3=3\gamma_E+c_E(3)$.
Since $R_{n,1}q/\Omega_n\in\HH(D)$ and  $R_{n,1}(\infty)q(\infty)/\Omega_n(\infty)=0$ for $q\in\PP_{2n-1}$, then for $z\in D$
\begin{equation}
\frac{R_{n,1}(z)q(z)}{\Omega_n(z)}
=\frac1{2 \pi i}
\oint_{\gamma_1}\frac{R_{n,1}(t)q(t)\,dt}
{(t-z)\Omega_n(t)}
=\int_E\frac{Q_{2n}(x)q(x)\,d\sigma(x)}
{(z-x)\Omega_n(x)}.
\label{repROm}
\end{equation}
Thus from \eqref{repROm}, \eqref{relQOm} and Remark \ref{rem25} it follows that for $z\in D$
\begin{equation}
R_{n,1}(z)=\frac{\Omega_n(z)}{q(z)}
\int_E \frac{Q_{2n}(x)q(x)\,d\sigma(x)}{(z-x)\Omega_n(x)}
=\frac{\Omega_n(z)}{Q_{2n}(z)}
\int_E\frac{Q_{2n}^2(x)\,d\sigma(x)}
{(z-x)\Omega_n(x)}.
\label{repR}
\end{equation}
From the representation \eqref{repR} and the orthogonality relations \eqref{relortF}, we obtain ultimately the following new orthogonality relations with respect to polynomial $\Omega_n(z)=z^n+\dotsb$
\begin{equation}
\int_F\Omega_n(y)p(y)\biggl\{\frac1{Q_{2n}(y)}
\int_E \frac{Q_{2n}^2(x)\,d\sigma(x)}{(y-x)\Omega_n(x)}\biggr\}\,d\sigma_2(y)=0,
\label{relOm}
\end{equation}
where $p\in\PP_{n-1}$ is an arbitrary polynomial. Set
\begin{equation}
\Psi_n(z):=\frac1{Q_{2n}(z)}
\int_E \frac{Q_{2n}^2(x)\,d\sigma(x)}{(z-x)\Omega_n(x)},
\label{repPsi}
\end{equation}
we obtain from \eqref{relOm} that
\begin{equation}
\int_F\Omega_n(y)p(y)\Psi_n(y)
\,d\sigma_2(y)=0,
\label{relOmPsi}
\end{equation}
$p\in\PP_{n-1}$. With that $\Psi_n$ we are in the situation  described in Lemma \ref{lem2}
(see Remark \ref{rem23}) for $n\in\Lambda$.
Thus by Lemma \ref{lem2} and Remark \ref{rem23} from \eqref{relOm} and \eqref{equQOm} it follows that
\begin{equation}
V^{\mu_\Omega}(y)-V^{\lambda_E}(y)\equiv c_4=\const,
\quad y\in F.
\label{equOmQ}
\end{equation}
Therefore  ${\mu_\Omega}=\beta_F(\lambda_E)$, $\lambda_F=\lambda_F(3)$ and $\Lambda=\NN$. Also we obtain that
\begin{equation}
V^{\mu_\Omega}(z)=V^{\lambda_E}(z)-G^{\lambda_E}_F(z)+c_4,
\quad
c_4=\int_E g_F(x,\infty)\,d\lambda_E(x),
\label{equOmQ2}
\end{equation}
and by Remark \ref{rem24} and relation \eqref{equQOm} we have uniformly in $D$ 
\begin{equation}
\biggl|\int_E \frac{Q_{2n}^2(x)\,d\sigma(x)}{(z-x)\Omega_n(x)}\biggr|^{1/n}\to e^{-c_3}.
\label{err}
\end{equation}

Finally, from the above results we  obtain that for $z\in D$
\begin{equation}
\(f-\frac{P_{2n,1}}{Q_{2n}}\)(z)
=\frac{R_{n,1}}{Q_{2n}}(z)=\frac{\Omega_n(z)}{Q_{2n}^2(z)}
\int_E\frac{Q_{2n}^2(x)\,d\sigma(x)}{(z-x)\Omega_n(x)}.
\label{ratcon1}
\end{equation}
Now from \eqref{ratcon1}, \eqref{equOmQ2}  and \eqref{err} we obtain that uniformly in $D$ and as $n\to\infty$
\begin{equation}
\biggl|\(f-\frac{P_{2n,1}}{Q_{2n}}\)(z)\biggr|^{1/n}
\to e^{-V^{\mu_\Omega}(z)+4V^{\lambda_E}(z)-c_3}=e^{3V{\lambda_E}(z)+G_F^{\lambda_E}(z)-c_4-c_3},
\label{ratcon2}
\end{equation}
since
$$
4V^{\lambda_E}(z)-V^{\mu_\Omega}(z)=
4V^{\lambda_E}(z)-
V^{\lambda_E}(z)+G^{\lambda_E}_F(z)-c_4
=3V^{\lambda_E}(z)+G^{\lambda_E}_F(z)-c_4.
$$
Thus, by this identity (see Section \eqref{s2s3})
$$
3V^{\lambda_E}(z)+G^{\lambda_E}_F(z)-c_E(3)\equiv -G^{\lambda_F}_E(z)-3g_E(z,\infty),
$$
and since $c_3=3\gamma_E+c_E(3)$ and $c_3+c_4=c_E(3)$ (see \eqref{consts}),  
from \eqref{ratcon2} we obtain that
\begin{align}
\biggl|\(f-\frac{P_{2n,1}}{Q_{2n}}\)(z)\biggr|^{1/n}
 &\to  e^{3V^{\lambda_E(3)}(z)+G^{\lambda_E(3)}_F(z)-c_4-c_3}\nonumber\\
&=e^{-G^{\lambda_F(3)}_E(z)-3g_E(z,\infty)+c_E(3)-c_4-c_3}\nonumber\\
&=e^{-G^{\lambda_F(3)}_E(z)-3g_E(z,\infty)}<1.
\label{ratcon3}
\end{align}
From \eqref{ratcon3} the limit relation \eqref{hp2conv}  follows immediately. Theorem \ref{the1} is proved.

\section{Proof of Theorem \ref{the2}}\label{s4}

\subsection{}\label{s4s1}
Only for this Section \ref{s4}, and to simplify the notations,
we set $n=\ell$ and rewrite \eqref{hepa3} in the following form
\begin{align}
R_{n,1}(z):=
\bigl(Q_{3n}f-P_{3n,1}\bigr)(z)
&=O(z^{-n-1}),\quad z\to\infty,
\label{hepa3n1}\\
R_{n,2}(z):=
\bigl(Q_{3n}f^2-P_{3n,2}\bigr)(z)
&=O(z^{-n-1}),\quad z\to\infty,
\label{hepa3n2}\\
R_{n,3}(z):=
\bigl(Q_{3n}f^3-P_{3n,3}\bigr)(z)
&=O(z^{-n-1}),\quad z\to\infty,
\label{hepa3n3}
\end{align}
where $\mdeg{Q_{3n}},\mdeg{P_{3n,1}},\mdeg{P_{3n,2}},\mdeg{P_{3n,3}}\leq{3n}$.
From \eqref{hepa3n1}--\eqref{hepa3n3} and \eqref{repf}--\eqref{repf3} it follows that (cf. \eqref{ort2f}, \eqref{ort2f2}) 
\begin{align}
&\int_E Q_{3n}(x)p(x)\,d\sigma(x)=0,\label{ort3f}\\
&\int_E Q_{3n}(x)q(x)\biggl[\frac1{\sqrt{AB}}+\myh{\sigma}_2(x)\biggr]\,d\sigma(x)=0,\label{ort3f2}\\
&\int_E Q_{3n}(x)r(x)\biggl[\frac1{AB}+\frac1{\sqrt{AB}}\myh{\sigma}_2(x)+\myh{\<\sigma_2,\sigma\>}(x)\biggr]\,d\sigma(x)=0\label{ort3f3}
\end{align}
for every polynomials $p,q,r\in\PP_{n-1}$. After combining \eqref{ort3f}--\eqref{ort3f3} we obtain the main orthogonality relation on $E$
\begin{equation}
\int_E Q_{3n}(x)\biggl\{
p(x)+q(x)\myh{\sigma}_2(x)+r(x)\myh{\<\sigma_2,\sigma\>}(x)\biggr\}\,d\sigma(x)=0
\label{ort3m}
\end{equation}
for every polynomials $p,q,r\in\PP_{n-1}$. Since all three polynomials $p,q,r\in\PP_{n-1}$ are free, and the total number of free parameters (i.e. the coefficients of $p,q,r$) equals $3n$, then for each  polynomial $\omega_{3n-1}\in\PP_{3n-1}$, $Z(\omega_{3n-1})\subset E$, there exist polynomials $p_{n-1},q_{n-1},r_{n-1}\in\PP_{n-1}$ such that the function
\begin{equation}
\ell_n(z):=\frac{
p_{n-1}(z)+q_{n-1}(z)\myh{\sigma}_2(z)+r_{n-1}(z)\myh{\<\sigma_2,\sigma\>}(z)}{\omega_{3n-1}(z)}
\label{form3l}
\end{equation}
is a holomorphic function on $E$ (and thus outside of $F$). Indeed, given a polynomial $\omega_{3n-1}\in\PP_{3n-1}$, $Z(\omega_{3n-1})\subset E$, the corresponding polynomials $p_{n-1}$, $q_{n-1}$, $r_{n-1}\in\PP_{n-1}$ can be constructed in such way that the function 
$$
p_{n-1}(z)+q_{n-1}(z)\myh{\sigma}_2(z)+r_{n-1}(z)\myh{\<\sigma_2,\sigma\>}(z)
$$
vanishes at all $3n-1$ zeros of the polynomial $\omega_{3n-1}$. Since we have $3n$ free parameters to solve this problem of linear interpolation, a solution of this problem always exists.

From the representation \eqref{form3l} it follows that the function $\ell_n(z)$  has a zero at the infinity point of multiplicity at least $2n$. Thus
\begin{equation}
\int_{\gamma_2} \ell_n(z)z^k\,dz=0,
\quad k=0,\dots,2n-2,
\label{ort3l1}
\end{equation}
where $\gamma_2$ is an arbitrary contour which separates the compact set $F$ from $E$ and the infinity point.
As already demonstrated, the relation \eqref{ort3l1} implies that
\begin{equation}
\int_F \frac{q_{n-1}(y)+r_{n-1}(y)\myh{\sigma}(y)}{\omega_{3n-1}(y)}y^k\,d\sigma_2(y)=0,\quad k=0,\dots,2n-2.
\label{ort3l2}
\end{equation}
Since $Z(\omega_{3n-1})\subset E$, from \eqref{ort3l2} it directly follows that the function $L_n(z):=q_{n-1}(z)+r_{n-1}(z)\myh{\sigma}(z)$  has at least  $2n-1$ simple zeros on the set $F$. Let us denote the corresponding polynomial by $\Omega_{2n-1}(z)=z^{2n-1}+\dotsb$, i.e.  the function
\begin{equation}
L_n(z):=\frac{q_{n-1}(z)+r_{n-1}(z)\myh{\sigma}(z)}{\Omega_{2n-1}(z)}
\label{form3l2}
\end{equation}
is a holomorphic function on $F$ (and therefore, in $\myh\CC\setminus E$). Since the function $L_n(z)$ has a zero at the infinity point of the multiplicity at least $n$, then by the established way we obtain the following orthogonality relations on $E$
\begin{equation}
\int_E  \frac{r_{n-1}(x)x^k\,d\sigma(x)}{\Omega_{2n-1}(x)},
\quad k=0,\dots,n-2,
\label{ort3L1}
\end{equation}
where $Z(\Omega_{2n-1})\subset F$. From \eqref{ort3L1} it follows that $\mdeg{r_{n-1}}=n-1$ and $Z(r_{n-1})\subset E$.

Suppose that as $n\to\infty$, $n\in\Lambda \subset\NN$, 
\begin{equation}
\frac1{2n}\chi(\Omega_{2n-1}) \overset{*}\longrightarrow \mu_\Omega\in M_1(F).
\label{limdis1}
\end{equation}
Then by Lemma \ref{lem2} and Corollary \ref{cor5} we have that
for $r_{n-1}(z)=z^{n-1}+\dotsb$ as $n\to\infty$, $n\in\Lambda \subset\NN$, 
\begin{equation}
\frac1n\chi(r_{n-1}){\overset{*}\longrightarrow}\mu_r\in M_1(E),
\quad \mu_r=\beta_E(\mu_\Omega), \quad
\supp{\mu_r}=E.
\label{limdis2}
\end{equation}
Therefore 
\begin{align*}
&V^{\mu_r}(x)-V^{\mu_\Omega}(x)\equiv\const, && x\in E, \\
&V^{\mu_r}(z)=V^{\mu_\Omega}(z)-G_E^{\mu_\Omega}(z)+\const, && \const=\int_F g_E(y,\infty)\,d\mu_\Omega(y).
\end{align*}
By the Cauchy formula and based on \eqref{ort3L1} we obtain that
\begin{align}
L_n(z)&=\frac1{2\pi i} \int_{\gamma_1}
\frac{L_n(t)}{t-z}dt=
\int_E\frac{r_{n-1}(x)\,d\sigma(x)}{(z-x)\Omega_{2n-1}(x)}\nonumber\\
&=\frac1{r_{n-1}(z)}\int_E \frac{r^2(x)\,d\sigma(x)}{(z-x)\Omega_{2n-1}(x)}.\label{repL1}
\end{align}
From \eqref{repL1} it follows that
\begin{equation}
q_{n-1}(z)+r_{n-1}(z)\myh{\sigma}(z)
=\frac{\Omega_{2n-1}}{r_{n-1}(z)}
\int_E \frac{r^2_{n-1}(x)\,d\sigma(x)}{(z-x)\Omega_{2n-1}(x)},\quad z\in D.
\label{repL2}
\end{equation}
Ultimately from \eqref{ort3l2} and  \eqref{repL2} we obtain the following orthogonality conditions on $F$
\begin{equation}
\int_F\frac{\Omega_{2n-1}(y)}{\omega_{3n-1}(y)r_{n-1}(y)}s(y)\biggl\{\int_E\frac{r^2_{n-1}(x)\,d\sigma(x)}{(y-x)\Omega_{2n-1}(x)}\biggr\}\,d\sigma_2(y)=0
\label{ort3l3}
\end{equation}
for every polynomial $s\in\PP_{2n-2}$.

By Lemma \ref{lem1} and Lemma \ref{lem2} (see Corollary \ref{cor5} and Remark \ref{rem23}) it follows that if
$$
\frac1{3n}\chi(\omega_{3n-1}) \overset{*}\longrightarrow \mu_\omega\in M_1(E)
$$
as $n\to\infty$, $n\in\Lambda'\subset \Lambda$ (recall that $Z(\omega_{3n-1})\subset E$), then
$$
\frac1{2n}\chi(\Omega_{2n-1}){\overset{*}\longrightarrow}\mu_\Omega\in M_1(F)
$$
where
$$
-4V^{\mu_\Omega}(y)+3V^{\mu_\omega}(y)+V^{\mu_r}(y)\equiv\const,\quad y\in F
$$
(cf. \eqref{limdis1}). Finally from the above we obtain that
$$
4V^{\mu_\Omega}(y)-3V^{\mu_\omega}(y)-V^{\mu_r}(y)
=3V^{\mu_\Omega}(y)+G_E^{\mu_\Omega}(y)-3V^{\mu_\omega}(y)\equiv\const,\quad y\in F.
$$
From the above and the representation \eqref{form3l} for the function $\ell_n(z)$ we obtain the following result 
\begin{align}
\ell_n(z)s(z)&=\int_F
\frac{(q_{n-1}(y)+r_{n-1}(y)\myh{\sigma}(y))s(y)
\,d\sigma_2(y)}{\omega_{3n-1}(y)(z-y)}\nonumber\\
&=\int_F\frac{\Omega_{2n-1}(y)s(y)}{r_{n-1}(y)\omega_{3n-1}(y)}\frac1{(z-y)}
\biggl\{
\int_E\frac{r^2_{n-1}(x)\,d\sigma(x)}{(y-x)\Omega_{2n-1}(x)}\biggr\}\,d\sigma_2(y),
\label{form3l3}
\end{align}
where $s\in\PP_{2n-1}$ is an arbitrary polynomial.
Set $s=\Omega_{2n-1}$. Therefore, based on \eqref{form3l3} we obtain that
\begin{align}
&p_{n-1}(z)+q_{n-1}(z)\myh{\sigma}_2(z)+r_{n-1}(z)\myh{\<\sigma_2,\sigma\>}(z)\nonumber\\
&=\frac{\omega_{3n-1}(z)}{\Omega_{2n-1}(z)}\int_F \frac{\Omega^2_{2n-1}(y)}{r_{n-1}(y)\omega_{3n-1}(y)}\frac1{(z-y)}\biggl\{\int_E\frac{r^2_{n-1}(x)\,d\sigma(x)}{(y-x)\Omega_{2n-1}(x)}\biggr\}\,d\sigma_2(y).\label{repform3m}
\end{align}
Since $\omega_{3n-1}$ is an arbitrary polynomial, $Z(\omega_{3n-1})\subset E$, then from the orthogonality relations (cf. \eqref{ort3m}) on $E$
\begin{equation}
\int_E Q_{3n}(x)\{
p_{n-1}(x)+q_{n-1}(x)\myh{\sigma}_2(x)
+r_{n-1}(x)\myh{\<\sigma_2,\sigma\>}(x)
\}\,d\sigma(x)=0
\label{ort3mn}
\end{equation}
and from \eqref{repform3m} it follows that
\begin{align}
&\int_E Q_{3n}(x)\omega_{3n-1}(x)\Psi_n(x)\,d\sigma(x)=0,\label{ort3mnf}\\
&\Psi_n(x):=\frac1{\Omega_{2n-1}(x)}\int_F \frac{\Omega^2_{2n-1}(y)}{r_{n-1}(y)\omega_{3n-1}(y)}\frac1{(x-y)}\biggl\{\int_E\frac{r^2_{n-1}(t)\,d\sigma(t)}{(y-t)\Omega_{2n-1}(t)}\biggr\}\,d\sigma_2(y).\nonumber
\end{align}
Since $\Psi_n(x)>0$ then from \eqref{ort3mnf} it follows that $\mdeg{Q_{3n}}=3n$ and $Z(Q_{3n})\subset E$.

Let for a subsequence  $\Lambda''\subset\Lambda'\subset\Lambda$
\begin{equation}
\frac1{3n}\chi(Q_{3n}){\overset{*}\longrightarrow}\mu_Q\in M_1(E), \quad n\to\infty, \quad n\in\Lambda''.
\label{limdisQ}
\end{equation}
Then by Lemma \ref{lem2} and Corollary \ref{cor5} we have that
\begin{equation}
3V^{\mu_Q}(x)-V^{\mu_\Omega}(x)\equiv\const,
\quad x\in E,\quad\supp{\mu_Q}=E.
\label{relequQOm}
\end{equation}
Also by Lemma \ref{lem2}, Corollary \ref{cor4} and \eqref{limdis2}
\begin{align*}
&V^{\mu_r}(x)-V^{\mu_\Omega}(x)\equiv\const, && x \in E, \\
&4V^{\mu_\Omega}(y)-3V^{\mu_\omega}(y)-V^{\mu_r}(y)\equiv\const, && y\in F,
\end{align*}
$\supp{\mu_r}=E$, $\supp{\mu_\Omega}= F$, $\supp{\mu_\omega}= E$. Since
$$V^{\mu_r}(z)=V^{\mu_\Omega}(z)-G_E^{\mu_\Omega}(z)+\const, \quad z\in\myh{\CC}, $$
then ultimately
\begin{align}
&3V^{\mu_Q}(x)-V^{\mu_r}(x)\equiv\const, && x\in E,\label{equQr1}\\
&3V^{\mu_\Omega}(y)+G_E^{\mu_\Omega}(y)-3V^{\mu_\omega}(y)\equiv\const, && y\in F.\label{equOmom1}
\end{align}
Set
$$
v(z):=3V^{\mu_Q}(z)-V^{\mu_r}(z)+2g_E(z,\infty).
$$
Then from \eqref{equQr1} it follows that $v(z)\equiv2\gamma_E=\const$, $z\in\myh{\CC}$, and thus
\begin{equation}
3V^{\mu_Q}(x)-V^{\mu_r}(x)\equiv2\gamma_E,\quad x\in E.
\label{equQr2}
\end{equation}

\subsection{}\label{s4s2}
Let us multiply both sides of \eqref{hepa3n1} by $(1/\sqrt{AB}+\myh{\sigma}_2(z))q(z)$, where $q\in\PP_{n-1}$, to obtain
\begin{equation}
\int_\Gamma R_{n,1}(t)\biggl(\frac1{\sqrt{AB}}+\myh{\sigma}_2(t)\biggr)q(t)\,dt=0,
\label{intR11}
\end{equation}
where $\Gamma$ is an arbitrary contour which separates the set $E\cup F$ from the infinity point.
The relation \eqref{intR11} is equivalent to the following
\begin{equation}
\int_{\gamma_1}R_{n,1}(t)\biggl(\frac1{\sqrt{AB}}+\myh{\sigma}_2(t)\biggr)q(t)\,dt+
\int_{\gamma_2} R_{n,1}(t)\biggl(\frac1{\sqrt{AB}}+\myh{\sigma}_2(t)\biggr)q(t)\,dt=0,
\label{intR12}
\end{equation}
where $\gamma_1$ and $\gamma_2$  are the contours which separates the compact sets $E$ and $F$ from each other and from the infinity point. From \eqref{intR12} we obtain in the established way the following relation
\begin{equation}
\int_E Q_{3n}(x)\biggl(\frac1 {\sqrt{AB}}+\myh{\sigma}_2(x)\biggr)q(x)\,d\sigma(x)
+\int_F R_{n,1}(y)q(y)\,d\sigma_2(y)=0
\label{intR13}
\end{equation}
for every polynomial $q\in\PP_{n-1}$. By the orthogonality relations \eqref{ort3f2} the first integral in \eqref{intR13} equals zero and we obtain that
\begin{equation}
\int_F R_{n,1}(y)q(y)\,d\sigma_2(y)=0
\label{intR14}
\end{equation}
for every $q\in\PP_{n-1}$.

Let us now multiply both sides of \eqref{hepa3n1} with the function
$$
\biggl\{\frac1{AB}+\frac1{\sqrt{AB}}\myh{\sigma}_2(z)+\myh{\<\sigma_2,\sigma\>}(z)\biggr\}r(z),
\quad r\in\PP_{n-1},
$$
to obtain in the similar way the next orthogonality relation for $R_{n,1}$ on $F$
\begin{equation}
\int_F R_{n,1}(y)\biggl\{\frac1{\sqrt{AB}}+\myh{\sigma}(y)\biggr\}r(y)\,d\sigma_2(y),
\quad r\in\PP_{n-1}.
\label{intR15}
\end{equation}
By combining the relation \eqref{intR14} and \eqref{intR15} we obtain finally the main orthogonality relation for $R_{n,1}$  on $F$ (cf. \eqref{ort3l2}):
\begin{equation}
\int_F R_{n,1}(y)(q(y)+r(y)\myh{\sigma}(y))
\,d\sigma_2(y)=0
\label{intR1m}
\end{equation}
for every polynomials $q,r\in\PP_{n-1}$. Since the polynomials $q,r\in\PP_{n-1}$ are arbitrary, then they can be chosen in such way that the function $(q+r\myh \sigma)(z)$ has $2n-1$ zeros in arbitrary predetermined set of $2n-1$ points on $F$ (cf. \eqref{form3l2}). Therefore from \eqref{intR1m} it follows that the function $R_{n,1}$ has at least $2n$ simple zeros on $F$. Let us denote by $u_{2n}(z)=z^{2n}+\dotsb$ the corresponding polynomial. Then the function $R_{n,1}/u_{2n}$ is a holomorphic function on $F$ and therefore it is holomorphic in  $\myh{\CC}\setminus E$. In addition, the function $R_{n,1}/u_{2n}$ has zero at the infinity point of order at least  $3n+1$. Let $s(z)\in\PP_{3n-1}$ be an arbitrary polynomial. Then 
$$
\int_{\gamma_1}\frac{R_{n,1}(t)}{u_{2n}(t)}s(t)\,dt=0.
$$
From the last relation by the already known way we obtain the following orthogonality relations on $E$
\begin{equation}
\int_E  Q_{3n}(x)\frac{s(x)}{u_{2n}(x)}\,d\sigma(x)=0
\label{ortQu}
\end{equation}
for every polynomial $s\in\PP_{3n-1}$. 

Since  $Z(u_{2n})\subset F$, then there exists a subsequence $\Lambda'''\subset \Lambda''\subset \Lambda'\subset \Lambda$ such that
\begin{equation}
\frac1{2n}\chi(u_{2n}){\overset{*}\longrightarrow}\mu_u\in M_1(F),
\quad n\to\infty,\quad n\in\Lambda'''.
\label{limdisu}
\end{equation}
Since by \eqref{limdisQ} $\frac1{3n}\chi(Q_{3n}){\overset{*}\longrightarrow}\mu_Q$, then by Lemma \ref{lem2} we obtain from \eqref{ortQu} the identity
\begin{equation}
3V^{\mu_Q}(x)-V^{\mu_u}(x)\equiv\const,
\quad x\in E.
\label{equQu}
\end{equation}
By Cauchy formula we obtain that
\begin{equation}
\frac{(R_{n,1}s)(z)}{u_{2n}(z)}
=\frac1{2\pi i}
\int_{\gamma_1}\frac{R_{n,1}(t)s(t)\,dt}
{(t-z)u_{2n}(x)}
=\int_E\frac{Q_{3n}(x)s(x)\,d\sigma(x)}{(z-x)u_{2n}(x)}
\label{repRu}
\end{equation}
and thus (see Remark \ref{rem25})
\begin{equation}
R_{n,1}(z)=\frac{u_{2n}(z)}{Q_{3n}(z)}
\int_E\frac{Q^2_{3n}(x)\,d\sigma(x)}{(z-x)u_{2n}(x)},
\quad z\in D.
\label{repRuQ}
\end{equation}
Since in the representation $q(z)+r(z)\myh{\sigma}(z)$ both polynomials $q,r\in\PP_{n-1}$ are free, then for each predetermined polynomial $v_{2n-1}(z)=z^{2n-1}+\dotsb$, $v_{2n-1}\in\PP_{2n-1}$, $Z(v_{2n-1})\subset  F$, there exist two polynomials $\myt{q}_{n-1},\myt{r}_{n-1}\in\PP_{n-1}$ such that the function $(\myt{q}_{n-1}(z)+\myt{r}_{n-1}(z)\myh{\sigma}(z))/v_{2n-1}(z)$ is a holomorphic function in $\myh{\CC}\setminus E$. Then
\begin{align}
&\myt{q}_{n-1}(z)+\myt{r}_{n-1}(z)\myh{\sigma}(z)=
\frac{v_{2n-1}(z)}{\myt{q}_{n-1}(z)}
\int_E \frac{\myt{r}^2_{n-1}(x)\,d\sigma(x)}{(z-x)v_{2n-1}(x)},
\label{repqrv}\\
&\int_E \frac{\myt{r}_{n-1}(x)x^k\,d\sigma(x)}{v_{2n-1}(x)}=0,
\quad k=0,\dots,n-2.
\label{ortrv}
\end{align}
Finally from \eqref{intR1m}, \eqref{repRuQ} and \eqref{repqrv} we obtain the following orthogonal relation on $F$ for the polynomial $u_{2n}\in\PP_{2n}$  with respect to  every polynomial $v_{2n-1}\in\PP_{n-1}$ 
\begin{equation}
\int_F\frac{u_{2n}(y)}{Q_{3n}(y)}
\biggl\{
\int_E\frac{Q^2_{3n}(x)\,d\sigma(x)}{(y-x)u_{2n}(x)}
\biggr\}\frac{v_{2n}(y)}{\myt{r}_{n-1}(y)}
\biggl\{\int_E\frac{\myt{r}^2_{n-1}(t)\,d\sigma(t)}{(y-t)v_{2n-1}(t)}\biggr\}\,d\sigma_2(y)=0.
\label{ortuQ}
\end{equation}
Let introduce a varying weight function $\Psi_n(z)$
\begin{equation}
\Psi_n(y)=
\frac1{Q_{3n}(y)}
\biggl\{
\int_E\frac{Q^2_{3n}(x)\,d\sigma(x)}{(y-x)u_{2n}(x)}
\biggr\}\frac1{\myt{r}_{n-1}(y)}
\biggl\{\int_E\frac{\myt{r}^2_{n-1}(x)\,d\sigma(x)}{(y-x)v_{2n-1}(x)}\biggr\}.
\label{repPsi2}
\end{equation}
Then the relation \eqref{ortuQ} can be written as
\begin{equation}
\int_F u_{2n}(y)v_{2n-1}(y)\Psi_n(y)\,d\sigma_2(y)=0,
\label{ortuvPsi}
\end{equation}
where $u_{2n}\in\PP_{2n}$, $Z(u_{2n})\subset F$, $u_{2n}$ is fixed, and $v_{2n-1}\in\PP_{2n-1}$ is an arbitrary polynomial, $Z(v_{2n-1})\subset F$.

Ultimately by Lemma \ref{lem2}, Corollary \ref{cor5}, Remark \ref{rem24}, the relations \eqref{equQu}, \eqref{ortuvPsi}, \eqref{repPsi2} and \eqref{ortQu} we have the following identities for $\mu_Q,\mu_{\myt{r}}\in M_1(E)$, $\mu_u\in M_1(F)$
\begin{align}
&3V^{\mu_Q}(x)-V^{\mu_u}(x)\equiv\const, && x\in E, && \supp{\mu_Q}=E, \label{equQu2}\\
&V^{\mu_{\myt{r}}}(x)-V^{\mu_u}(x)\equiv\const, && x\in E, && \supp{\mu_{\myt{r}}}=E,\label{equru}\\
&4V^{\mu_u}(y)-3V^{\mu_Q}(y)-V^{\mu_{\myt{r}}}(y)\equiv\const, && y\in F, && \supp{\mu_u}=F.\label{equuQr}
\end{align}
From \eqref{equQu2} and \eqref{equru} it follows that 
\begin{equation}
\begin{aligned}
3V^{\mu_Q}(z)&\equiv V^{\mu_u}(z)-G_E^{\mu_u}(z)-2g_E(z,\infty)+\const, \\
V^{\mu_{\myt{r}}}(z)&\equiv V^{\mu_u}(z)-G_E^{\mu_u}(z)+\const, \quad z\in\myh\CC.
\end{aligned}
\label{equQur}
\end{equation}
By substituting \eqref{equQur} in \eqref{equuQr} we obtain that
$$
-2V^{\mu_u}(y)-2g_E(y,\infty)-2G_E^{\mu_u}(y)\equiv\const,\quad y\in F.
$$
Thus
$$
V^{\mu_u}(y)+G_E^{\mu_u}(y)+g_E(y,\infty)\equiv\const,\quad y\in F,
$$
and therefore by \eqref{equivconj} we have $\mu_u=\lambda_F(1)$ with $\const=c_F(1)$.

Finally from \eqref{repRuQ} we obtain (see Remark \ref{rem25})
\begin{equation}
\(f-\frac{P_{3n,1}}{Q_{3n}}\)(z)=\frac{u_{2n}(z)}{Q^2_{3n}(z)}
\int_E\frac{Q^2_{3n}(x)\,d\sigma(x)}{(z-x)u_{2n}(x)},
\quad z\in D.
\label{repRPQ}
\end{equation}
Set
\begin{equation}
v(z):=3V^{\mu_Q}(z)-V^{\lambda_F(1)}(z)+G_E^{\lambda_F(1)}(z)+2g_E(z,\infty).
\label{repQla}
\end{equation}
From \eqref{equQu2} it follows that
$$
v(x)\equiv\const,\quad x\in E.
$$
Also it is easy to see that $v(z)$ is a continuous function in $\myh{\CC}$ and is a harmonic function in $\myh{\CC}\setminus E$.
Therefore $v(z)\equiv\const$ in $\myh{\CC}$, that is $v(z) \equiv v(\infty)$. From \eqref{repQla} we obtain that
\begin{align}
v(z)&=3V^{\mu_Q}(z)-V^{\lambda_F(1)}(z)+G_E^{\lambda_F(1)}(z)+2g_E(z,\infty)\nonumber\\
&\equiv G_E^{\lambda_F(1)}(\infty)+2\gamma_E=v(\infty).
\label{repQlab}
\end{align}
Set $c_{E,F} = G_E^{\lambda_F(1)}(\infty)+2\gamma_E$. Thus
\begin{equation}
3V^{\mu_Q}(z)-V^{\lambda_F(1)}(z)\equiv -G_E^{\lambda_F(1)}(z)-2g_E(z,\infty)+c_{E,F}, 
\quad z\in\myh{\CC}\setminus E,
\label{rate1}
\end{equation}
and
\begin{equation}
3V^{\mu_Q}(x)-V^{\lambda_F(1)}(x)\equiv c_{E,F},
\quad 
x\in E.
\label{rate2}
\end{equation}
Therefore by Remark \ref{rem24} we have that
\begin{equation}
\lim_{\stackrel{n\to\infty}{n\in\Lambda}}\biggl|
\int_E\frac{Q^2_{3n}(x)\,d\sigma(x)}{(z-x)u_{2n}(x)}
\biggr|^{1/2n}=e^{-c_{E,F}},
\quad z\in D.
\label{rate3}
\end{equation}
Finally we obtain from \eqref{repRPQ} and \eqref{rate1} that
$$
\lim_{\stackrel{n\to\infty}{n\in\Lambda}}\biggl|
\(f-\frac{P_{3n,1}}{Q_{3n}}\)\biggr|^{1/2n}
=e^{3V^{\mu_Q}(z)-V^{\lambda_F(1)}(z)}e^{-c_{E,F}}
=e^{-G_E^{\lambda_F(1)}(z)-2g_E(z,\infty)}<1.
$$
Theorem \ref{the2} is proved.

\end{document}